\documentclass[11pt,leqno]{amsart}
\usepackage[letterpaper, total={6in, 8in}, left=1.25in, top=1.5in]{geometry}
\usepackage{amsmath}
\usepackage{amsfonts}
\usepackage{amssymb}
\usepackage{graphicx}
\usepackage{color}
\usepackage{hyperref}
\usepackage{xcolor}

\newtheorem{theorem}{Theorem}[section]
\newtheorem*{theorem*}{Theorem}
\newtheorem{lemma}{Lemma}[section]
\newtheorem{corollary}[theorem]{Corollary}

\newtheorem{prop}{Proposition}[section]
\newtheorem{definition}[theorem]{Definition}
\newtheorem{example}[theorem]{Example}

\newtheorem{conjecture}[theorem]{Conjecture}

\newtheorem{remark}[theorem]{Remark}

\def\l{\lambda}

\def\p{\partial}

\def\R{\mathbb{R}}

\def \p {\partial}

\newcommand{\Hn}{\mathbb H}
\newcommand{\dd}{\,\mathrm d}
\numberwithin{equation}{section}

\begin{document}

\title[Faber-Krahn  inequalities for weighted Robin eigenvalues]{Faber-Krahn  inequalities for weighted Robin eigenvalues}

\author{Daguang Chen} 
\address{Department of Mathematical Sciences,
Tsinghua University,
Beijing 100084,
China}
\email{dgchen@tsinghua.edu.cn}

\author{Kui Wang} 
\address{School of Mathematical Sciences, Soochow University, Suzhou, 215006, China}
\email{kuiwang@suda.edu.cn}

\author{Anqiang Zhu}
\address{School of Mathematics and Statistics, Wuhan University, Wuhan 430072, China}
\thanks{Corresponding author. Email: \href{mailto:aqzhu.math@whu.edu.cn}{aqzhu.math@whu.edu.cn} (A. Zhu).}
\email{aqzhu.math@whu.edu.cn}

\begin{abstract}
In this paper, we study the isoperimetric  inequalities for the Robin eigenvalues of the weighted Laplacian with positive Robin parameter in the Euclidean space $\mathbb{R}^n$ and  the hyperbolic space $\mathbb{H}^n$, respectively. More precisely, we prove that among all bounded Lipschitz domains with fixed weighted volume, the geodesic ball centered at the origin minimizes the first Robin eigenvalue of the weighted Laplacian, provided that the Robin parameter and the radial log-convex density satisfy suitable conditions. Furthermore, we show that the second Robin eigenvalue is bounded below by the first Robin eigenvalue of the geodesic ball centered at the origin with half the weighted volume. Our results extend classical Faber-Krahn  inequalities to the setting of weighted spaces with log-convex densities.  We also derive a lower
bound for the second Robin eigenvalue in terms of the first
eigenvalue of the centered ball with half the weighted volume.
\end{abstract}

\subjclass[2020]{35P15, 58J50, 49Q20}

\keywords{First Robin eigenvalue, Weighted measure, Isoperimetric inequality, Faber-Krahn  inequality}

\maketitle

\section{Introduction}
Let $\Omega$ be a bounded domain with Lipschitz boundary in  the Euclidean space $\R^n$ or  the hyperbolic space $\mathbb{H}^n$. We consider the Robin eigenvalue problem for the weighted Laplacian
\begin{align}\label{differential equation}
    \begin{cases}
        -\mathrm{div} \big(e^{h(r(x))}\nabla u\big)=\lambda(\Omega,\beta)\, e^{h(r(x))} u,  & \text{ in } \Omega, \\
        \dfrac{\partial u}{\partial \nu}+\beta u=0, & \text{ on } \p \Omega,
    \end{cases}
\end{align}
where $r(x)=d(o,x)$ is the  distance function from the origin $o$, $h$ is a smooth even function, $\nu$ denotes the outward unit normal to $\p \Omega$, and $\beta\in \R$ is the Robin parameter. The weight $e^{h(r)}$ defines a radially log-convex density on the space.
The eigenvalues of \eqref{differential equation}, denoted by $\l_{k}(\Omega, \beta)$, are increasing and continuous in $\beta$,  and satisfy
\begin{align*} 
\l_{1}(\Omega, \beta)<\l_{2}(\Omega, \beta ) \le \l_{3}(\Omega, \beta) \le \cdots \rightarrow \infty,
\end{align*}
where each eigenvalue is counted with multiplicity. The first eigenfunction is positive. Moreover the theory of self-adjoint operators yields variational characterizations of Laplacian eigenvalues, and the first  eigenvalue of \eqref{differential equation} admits the following variational characterization:
\begin{align}\label{variational representation}
    \lambda_{1}(\Omega, \beta)=\inf_{u\in H^{1}(\Omega)\setminus\{0\}}\frac{\int_{\Omega}|\nabla u|^{2}e^{h(r(x))}\,d\mu + \beta\int_{\partial \Omega}u^{2}e^{h(r(x))}\,d\sigma}{\int_{\Omega}u^{2}e^{h(r(x))}\,d\mu}.
\end{align}
where $d\mu$ is the volume measure in $\R^n$ or $\mathbb{H}^n$, and  $d\sigma$ is the induced surface measure on $\p \Omega$.

The classical Faber-Krahn  inequality asserts that among all bounded domains of fixed volume, the ball uniquely minimizes the first Dirichlet eigenvalue of the Laplace operator. This result has been extended in various directions, including different boundary conditions, nonlinear operators, and Riemannian settings. For the Robin Laplacian with positive Robin parameter, the first Faber–Krahn inequality was established by Bossel \cite{BosselCR, BosselZAMP}  in two dimensions, and later generalized to arbitrary dimensions by Daners \cite{DanersMathann}. For nonlinear operators, the analogous result was obtained independently by Bucur-Daners~\cite{BucurCV} and Dai-Fu~\cite{Dai2011}. In the Riemannian setting, Chen-Cheng-Li \cite{ChendaguangJDE} proved that the Faber–Krahn inequality holds for the Robin Laplacian on bounded domains in manifolds with Ricci curvature bounded below, as well as in hyperbolic space.
For negative values of $\beta$, it was conjectured by Bareket \cite{Bar77} in 1977 that the ball would be the maximizer among domains in $\mathbb{R}^n$ with the same volume. However, in 2015, Freitas and  Krej\v{c}i\v{r}{i}k \cite{FK15} disproved Bareket's conjecture by showing that the ball is not a maximizer for sufficiently negative values of $\beta$. In the same paper, the authors showed that in dimension two, the disk uniquely maximizes $\l_{1}(\Omega, \beta)$ for $\beta <0$ with $|\beta|$ sufficiently small, and conjectured that the maximizer still has radial symmetry whenever $\beta<0$ and should switch from a ball to a shell at some critical value of $\beta$. In the present paper, we mainly focus on the case of positive Robin parameter.

In recent years, spectral inequalities on weighted manifolds have attracted considerable attention. Chiacchio and Gavitone \cite{Chiacchiomathann} established a Faber-Krahn inequality for the Hermite operator with Robin boundary conditions, which corresponds to the Laplacian in Gaussian space. Their work revealed that the optimal domain for the weighted problem differs from the Euclidean case, motivating further investigation of eigenvalue problems under log-convex measures. 
It is well known that the isoperimetric inequality serves as an essential tool in the study of  Faber-Krahn  inequality. The radial log-convex density conjecture first stated by
Kenneth Brakke and was recorded as Conjecture~3.12 in Rosales, Ca\~nete,
Bayle, and Morgan \cite{Morgan}; see also Morgan's historical note
\cite{MorganBlog2010}.

\begin{conjecture}\label{conj1}
In $\mathbb{R}^{n}$  equipped with a smooth, radial, log-convex density, balls centered at the origin are isoperimetric regions for any given volume.
\end{conjecture}
This conjecture has been extensively studied (see, e.g., \cite{Figalli13, Morgan}  and the references therein). Recently, Chambers \cite{ChamberJEMS}  resolved this  conjecture in the Euclidean setting, thereby providing a crucial foundation for the investigation of isoperimetric and spectral inequalities in weighted spaces. Subsequently, Silini \cite{Silini24}  extended the corresponding results to hyperbolic space. 
 For later use, set
\begin{equation}\label{eq:flat-core-radius}
	R_0:=\sup\{r\geq0:h(r)=h(0)\}\in[0,\infty].
\end{equation}

\begin{theorem}[\cite{ChamberJEMS, Silini24}]\label{thm:weighted-isoperimetry}
	The following weighted isoperimetric statements hold.
	\begin{enumerate}
		\item[\textup{(a)}]
		If \(h\colon\R\to\R\) is smooth, even, and convex, then centered
		balls minimize weighted perimeter at fixed weighted volume in
		\(\R^n\).  Up to null sets, every minimizer is either a ball centered
		at the origin or a Euclidean ball contained in the flat core
		\(B_{R_0}(0)\).
		\item[\textup{(b)}]
		If \(h\colon\R\to\R\) is smooth, even, and strictly convex, then
		geodesic balls centered at \(o\) are the unique weighted-isoperimetric
		regions of prescribed weighted volume in \(\Hn^n\).
	\end{enumerate}
\end{theorem}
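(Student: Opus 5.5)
This theorem is a citation of deep results (Chambers' resolution of the log-convex density conjecture, and Silini's hyperbolic extension), so my plan is to recall the architecture of Chambers' argument rather than rebuild it from scratch. I would then indicate how it adapts to \(\Hn^n\).

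\emph{Existence and symmetrization.} First I would establish existence of weighted isoperimetric regions of every prescribed weighted volume. Since \(h\) is convex and even, \(e^{h}\) is nondecreasing in \(r\). This prevents mass from escaping to infinity, so a concentration-compactness argument applies, with standard regularity away from a small singular set. Next I would apply spherical symmetrization about a line through the origin, which does not increase weighted perimeter because the density is radial. This reduces the problem to minimizers that are rotationally symmetric about an axis. Their boundary is then generated by a planar curve \(\gamma\) in the half-plane \(\{(x,y): y\ge 0\}\).

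\emph{ODE analysis of the generating curve.} The first variation gives constant generalized mean curvature \(H + \partial_\nu h = \text{const}\). Written for \(\gamma\) parametrized by arclength with tangent angle \(\theta\), this becomes a second-order ODE in \((x,y,\theta)\). The core step, and the main obstacle, is showing that any such curve that closes up smoothly and meets the axis orthogonally must be a circle centered at the origin. The exception is the flat-core case, where it may be any circle inside \(B_{R_0}(0)\).

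Following Chambers, I would try to exclude every other profile. One tracks the curve starting from the point where it meets the axis and compares its curvature with the curvature of the centered circle through the same point. Convexity of \(h\) gives monotonicity of \(h'(r)/r\)-type quantities, and these yield sign conditions. From the sign conditions one shows the curve cannot turn back appropriately unless it coincides with the centered circle. A second-variation (stability) argument is needed to rule out the remaining symmetric candidates, for instance curves with several arcs. Uniqueness up to null sets then follows by tracking equality in these comparisons. Equality forces the curve to be a centered circle, unless \(h'\equiv 0\) on the relevant range, which is exactly what happens inside the flat core and leaves all Euclidean balls contained in \(B_{R_0}(0)\).

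\emph{Hyperbolic case (b).} Here I would repeat the same scheme in \(\Hn^n\), using geodesic polar coordinates and the warped product metric \(dr^2+\sinh^2 r\, g_{S^{n-1}}\). The generalized mean curvature ODE now picks up \(\coth r\) terms. Strict convexity of \(h\) is used to make the comparison inequalities strict. This strictness is what removes the flat-core degeneracy and gives uniqueness of the centered geodesic balls. Checking that the curvature comparisons survive the hyperbolic warping is the delicate point, and for that step I would rely on Silini's argument.
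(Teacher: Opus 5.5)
The paper does not prove this theorem. It is quoted from Chambers \cite{ChamberJEMS} (for $\R^n$, including the flat-core description of all minimizers) and Silini \cite{Silini24} (for $\Hn^n$), so deferring to those papers is exactly the paper's route. Read as a reconstruction of those proofs, however, your outline has a gap at the step you call the core.

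After spherical symmetrization you keep only axial symmetry. You then aim to show that every closed axially symmetric curve with constant generalized curvature, meeting the axis orthogonally, is a centered circle (or a circle in the flat core). That is a classification of all critical points of the problem. Nothing in the first variation yields it, and I do not believe it is what Chambers proves.

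What you discard is what his ODE comparison relies on. Spherical symmetrization makes every slice $E\cap\partial B_r(o)$ a spherical cap about the axis. Hence each boundary arc of the generating curve is a polar graph $\theta=\theta(r)$, possibly with arcs of centered circles, so the distance to $o$ is monotone along it. This monotonicity, together with the curvature equation and the fact that $h'$ is nondecreasing, is what should exclude non-centered profiles. A second-variation argument cannot replace it, since stability is a local condition and gives no global classification.

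Convexity also does \emph{not} make $h'(r)/r$ monotone. A smoothing of $h'(r)=\min\{r,1\}$ makes $h'(r)/r$ decreasing for $r>1$. Comparisons resting on such quantities would therefore fail for general convex $h$; only the monotonicity of $h'$ is available.

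There is a second gap: ``tracking equality'' only classifies the symmetrized minimizer $E^*$, while the statement concerns every minimizer $E$. You need a transfer step. First, $\operatorname{P}_\gamma(E^*)\le\operatorname{P}_\gamma(E)$ at equal weighted volume forces $E^*$ to be a minimizer.
\begin{itemize}
\item If $E^*=B_R(o)$, the slices of $E$ on $\partial B_r(o)$ have full measure for $r<R$ and measure zero for $r>R$. So $E=B_R(o)$ up to a null set.
\item If $E^*$ is a ball inside $B_{R_0}(0)$, then $E\subset B_{R_0}(0)$ up to a null set, and the density is constant there. Hence $E$ and the Euclidean ball $E^*$ have equal Euclidean volume and equal Euclidean perimeter. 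The equality case of the classical isoperimetric inequality then makes $E$ a Euclidean ball.
\end{itemize}
The same slice argument gives uniqueness in (b).

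Finally, smoothness of the generating curve at the axis must be proved, not assumed. For $n\ge 8$ the singular set of the symmetrized minimizer can a priori lie on the axis.
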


Regarding Faber-Krahn  inequalities of weighted Laplacian with log-convex densities, Chen and Mao \cite{Maojing2025} established a series of such  inequalities for the Dirichlet eigenvalues of the weighted Laplacian on 
$\R^n$ and $\mathbb{H}^n$. Despite these developments, the Faber-Krahn inequality for the weighted Laplacian with Robin boundary conditions under radially log-convex measures remains largely unexplored. This paper aims to fill this gap. Our main results establish that, under suitable conditions on the Robin parameter $\beta$ and the radial log-convex density, the geodesic ball centered at the origin uniquely minimizes the first Robin eigenvalue among all bounded Lipschitz domains of fixed weighted volume. Moreover, we prove a corresponding inequality for the second Robin eigenvalue, showing that it is bounded below by the first Robin eigenvalue of the geodesic ball centered at the origin with half of the weighted volume.

More precisely, in the Euclidean space $\R^n$ equipped with a radially log-convex weight $e^{h(|x|)}$, we prove the following theorem.
\begin{theorem}\label{Euclidean log convex}
    Let $\Omega\subset \mathbb{R}^n$ be a bounded Lipschitz domain, and let $B(R)\subset \mathbb{R}^{n}$ be the ball centered at the origin with the same weighted volume as $\Omega$. Let
    \(h\colon\R\to\R\) be smooth, even, and convex, and let \(\beta>0\). Suppose that one of the following conditions holds:
    \begin{enumerate}
        \item[\textup{(i)}] $\beta > \frac{C_{1}(R)R}{2}$, where $C_{1}(R)=\sup_{r\in (0,R]} \left(h''(r) + \frac{h'(r)}{r}\right)$;
        \item[\textup{(ii)}] $0 \leq h''(r) \leq \frac{n-1}{r^2}$ for all $r \in (0,R)$.
    \end{enumerate}
    Then
   \begin{equation}\label{eq:Euclidean-FK}
   	\lambda_1(\Omega,\beta)
   	\geq \lambda_1(B(R),\beta).
   \end{equation}
   Equality holds if and only if, up to a null set, \(\Omega\) is either
   a ball centered at the origin or a Euclidean ball contained in
   \(B_{R_0}(0)\).  In particular, if \(h(r)>h(0)\) for every \(r>0\),
   then equality holds if and only if \(\Omega\) is a round ball centered at the origin up to a null set.
\end{theorem}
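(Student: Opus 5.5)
We follow the Bossel--Daners method of $H$-functions in the weighted form. Let $u>0$ be the first Robin eigenfunction on the centered ball $B(R)$ with eigenvalue $\lambda_B=\lambda_1(B(R),\beta)$. By simplicity of $\lambda_1$ and the rotational invariance of the weight, $u=u(r)$ is radial. It solves
\[
u''+\Big(\frac{n-1}{r}+h'(r)\Big)u'+\lambda_B u=0,\qquad u'(R)+\beta u(R)=0.
\]
We set $v:=-u'/u$ and want $v\ge 0$, $v$ increasing on $[0,R]$, and $0\le v\le\beta$, with $v(R)=\beta$. Then $v$ satisfies the Riccati equation
\[
v'=\lambda_B+v^2-\Big(\frac{n-1}{r}+h'\Big)v .
\]
For a bounded Lipschitz $\Omega$ and a test field $\phi$ (eventually $\phi=v(r)\,\partial_r$ transplanted along level sets of the eigenfunction of $\Omega$), we use the weighted $H$-functional
\[
H_\Omega(U,\phi)=\frac{1}{|\partial U|_h}\Big(\int_{\partial U\cap\partial\Omega}\beta\,d\sigma_h+\int_{U}\big(\mathrm{div}\phi+\phi\cdot\nabla h-|\phi|^2\big)d\mu_h\Big),
\]
where $|\partial U|_h$ is the weighted perimeter. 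The usual argument of Daners (see also \cite{ChendaguangJDE}) carries over verbatim with $d\mu$ replaced by $e^h d\mu$, because integration by parts for $\mathrm{div}(e^h\nabla\cdot)$ produces exactly the drift term $\phi\cdot\nabla h$. It gives two facts. First, $\lambda_1(\Omega,\beta)=H_\Omega(\Omega,\nabla\log u_\Omega)$ for the eigenfunction $u_\Omega$. Second, for every admissible $\phi$ there is a superlevel set $U_t$ of $u_\Omega$ with $\lambda_1(\Omega,\beta)\ge H_\Omega(U_t,\phi)$ up to the usual error control.

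Choose $\phi$ on $\Omega$ via the rearrangement: assign to each level set $U_t$ the radius $r(t)$ of the centered ball with the same weighted volume, and set $|\phi|=v(r(t))$ on $\partial U_t\cap\Omega$. Then comparing $U_t$ with $B(r(t))$ needs two ingredients. The first is the weighted isoperimetric inequality, Theorem~\ref{thm:weighted-isoperimetry}(a): $|\partial U_t|_h\ge|\partial B(r(t))|_h$. The second is $v\le\beta$, so that boundary pieces on $\partial\Omega$ contribute at least as much as interior pieces would. With these one obtains $H_\Omega(U_t,\phi)\ge\lambda_B$ via the coarea/ODE identity $\frac{d}{dr}\big(|\partial B(r)|_h v\big)=|\partial B(r)|_h(\lambda_B+v^2)$, which is just the Riccati equation rewritten. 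The monotonicity of $t\mapsto$ weighted perimeter along the level sets uses that $v\ge 0$.

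The main obstacle is proving $0\le v\le\beta$ on $[0,R]$. This is where (i) or (ii) is needed, since the drift $h'$ can make $v$ overshoot $\beta$ in the interior. We have $v(0)=0$ and $v'(0)=\lambda_B/n>0$, so $v\ge0$ follows by ODE comparison, because $v=0$ forces $v'=\lambda_B>0$. For $v\le\beta$ we argue by contradiction. Assume an interior maximum $r_*<R$ of $v$ with $v(r_*)>\beta$. Differentiating the Riccati equation there gives
\[
v''(r_*)=-\Big(h''-\frac{n-1}{r_*^2}\Big)v(r_*).
\]
Under (ii) this is $\ge0$, which yields a contradiction with strictness after a small perturbation argument.

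Under (i) we instead compare with the quantity $w=v-\frac{r}{2}C_1$, or directly estimate with $\lambda_B$. Here we use the bound $\lambda_B\le\beta|\partial B|_h/|B|_h$, and the identity $\int_0^R(\lambda_B+v^2)|\partial B(r)|_h\,dr=\beta|\partial B(R)|_h$. We also use that $h''+h'/r\le C_1$ controls the growth of $\frac{n-1}{r}+h'$ against $\frac{\lambda_B}{v}+v$. The aim is to show that the function $\frac{n-1}{r}+h'-v-\lambda_B/v$ cannot change sign twice. Together with $\beta>C_1R/2$, this rules out an interior crossing of the level $\beta$.

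Equality: if equality holds, then equality holds in the isoperimetric step for a.e. level set. By the characterization in Theorem~\ref{thm:weighted-isoperimetry}(a), $\Omega$ is a centered ball or a Euclidean ball inside $B_{R_0}(0)$, where the weight is constant and the problem reduces to the unweighted Euclidean one. If $h>h(0)$ for all $r>0$, then $R_0=0$ and only centered balls remain.
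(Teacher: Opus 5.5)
Your overall plan matches the paper's: the weighted Bossel--Daners $H$-functional, the level-set transplantation $\varphi(x)=v(r(\psi(x)))$, the weighted isoperimetric inequality (Theorem~\ref{thm:weighted-isoperimetry}(a)), and the bound $v<\beta$ on the model ball. There is a genuine gap, though, at the step you yourself call the main obstacle: under hypothesis (i) you never prove $v\le\beta$. You only list possible ingredients (the bound $\lambda_B\le\beta|\partial B|_h/|B|_h$, the flux identity, and a sign-change count for $\frac{n-1}{r}+h'-v-\lambda_B/v$), with no argument connecting them to $v\le\beta$.

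The sign-change count cannot work by itself. A single sign change of $v'$ is compatible with $v$ rising above $\beta$ and returning to $v(R)=\beta$. This really happens: for $h=r^2/2$ and suitable $\beta<R$ one gets $v'(R)<0$. So the size of $\beta$ must enter through an actual estimate. The missing computation is short. With $\xi=v-\frac{C_1}{2}r$, which you mention but never use, the second-order equation for $v$ becomes
\[
\xi''+\Big(\frac{n-1}{r}+h'-2v\Big)\xi'-\Big(C_1-h''+\frac{n-1}{r^2}\Big)\xi+\frac{C_1}{2}\,r\Big(\frac{h'}{r}+h''-C_1\Big)=0 .
\]
By definition of $C_1$, the coefficient $C_1-h''+\frac{n-1}{r^2}\ge \frac{h'}{r}+\frac{n-1}{r^2}$ is positive, and the last term is $\le0$. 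Hence at an interior positive maximum of $\xi$ one gets $\xi''>0$, a contradiction. Since $\xi(0)=0$ and $\xi(R)=\beta-\frac{C_1R}{2}>0$ (this is exactly where (i) is used), it follows that $\xi<\xi(R)$ on $[0,R)$, i.e.\ $v(r)<\beta-\frac{C_1}{2}(R-r)<\beta$.

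Several smaller points also need repair.
\begin{enumerate}
\item Under (ii), your maximum argument only yields $v''(r_*)=\big(\frac{n-1}{r_*^2}-h''\big)v(r_*)\ge0$, which does not contradict a maximum. The fix is the linear equation
\[
w'+\Big(\frac{n-1}{r}+h'-2v\Big)w=\Big(\frac{n-1}{r^2}-h''\Big)v\ge0,\qquad w:=v'.
\]
An integrating factor together with $w>0$ near $0$ gives $w>0$ on $(0,R)$, so $v$ is strictly increasing and $v<\beta$.
\item Your $H$-functional is misstated. It must be normalized by the weighted volume $|U|_h$, not the perimeter, and the vector-field form needs $\beta-\phi\cdot\nu$ on $\partial U\cap\partial\Omega$. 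As written, $H_\Omega(\Omega,\nabla\log u_\Omega)\neq\lambda_1(\Omega,\beta)$ for either sign of $\phi$.
\item For equality you need the strict form of the level-set inequality. If $\varphi\neq|\nabla\psi|/\psi$ on a set of positive measure, then $\lambda_1(\Omega,\beta)>H_\Omega(U(t),\varphi)$ on a set of levels of positive measure. Only this lets you conclude that equality forces isoperimetric equality for a.e.\ level.
\item You then need a nesting argument to pass from the level sets to $\Omega$. Increasing balls $B_{c_j}(a_j)$ in the flat core satisfy $|c_{j+1}-c_j|\le a_{j+1}-a_j$, so their union is again a ball in the flat core.
\end{enumerate}
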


\begin{remark}
    For the log-convex Gaussian-type density \(e^{|x|^2/2}\),  we have $h(r) = r^2/{2}$, which yields $C_{1}(R)=2$. Hence, for $\beta>R$, the Faber-Krahn  inequality holds for this measure.
\end{remark}

In the hyperbolic space $\mathbb{H}^n$, we establish the analogous result.
\begin{theorem}\label{Hyperbolic space with log convex}
    Let $\Omega\subset \mathbb{H}^n$ be a bounded Lipschitz domain, and let $B(R)\subset \mathbb{H}^{n}$ be the geodesic ball centered at the origin with the same weighted volume as $\Omega$.  Let
    \(h\colon\R\to\R\) be smooth, even, and strictly convex, and let
    \(\beta>0\). 
    Suppose one of the following conditions holds:
    \begin{enumerate}
        \item[\textup{(i)}]  $\beta > \left(\frac{C_{2}(R)}{2}-\min\left\{1,\frac{n-1}{2}\right\}\right)\tanh R$, where $C_{2}(R)=\sup_{r\in (0,R]}\left(\frac{h'(r)}{\tanh r}+\frac{h''(r)}{\tanh' r}\right)$;
        \item[\textup{(ii)}] $0 \leq h''(r) \leq \frac{n-1}{\sinh^2 r}$ for all $r \in (0,R)$.
    \end{enumerate}
Then
\begin{equation}\label{eq:Hyperbolic-FK}
    	\lambda_1(\Omega,\beta)
    	\geq\lambda_1(B(R),\beta).
    \end{equation}
   with equality if and only if $\Omega$ is a geodesic ball centered at the origin up to a null set.

\end{theorem}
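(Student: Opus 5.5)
We follow Bossel–Daners' method as adapted by Chen–Cheng–Li to hyperbolic space, with all measures replaced by their weighted versions, $dV=e^{h(r)}d\mu$ and $dA=e^{h(r)}d\sigma$. Let $u>0$ be the first radial eigenfunction on $B(R)$, $u=u(r)$, with eigenvalue $\lambda_B=\lambda_1(B(R),\beta)$. The key intermediate object is $\phi:=-u'/u$. It satisfies the Riccati equation
\[
\phi'=\lambda_B+\phi^2-\Big(\tfrac{(n-1)\cosh r}{\sinh r}+h'(r)\Big)\phi,\qquad \phi(0)=0,\quad \phi(R)=\beta .
\]

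\textbf{Step 1: Bossel–Daners functional.} For a test vector field $V$ on $\Omega$, consider
\[
H_\Omega(V,U)=\frac{1}{V_h(\Omega)}\Big(\int_{\partial\Omega}\!\beta\,dA-\int_\Omega\big(|V|^2-\mathrm{div}_hV\big)dV\Big),
\]
where $\mathrm{div}_hV=e^{-h}\mathrm{div}(e^hV)$. The standard argument with the positive first eigenfunction $v$ of $\Omega$ (passing through the level sets $\{v>t\}$ and the coarea formula) gives
\[
\lambda_1(\Omega,\beta)\ge \inf_{t}H_{U_t}(V)
\]
for any $V$ with $|V|$ bounded by $\beta$ on $\partial\Omega$. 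Here we use the weighted Rayleigh quotient~\eqref{variational representation} together with the weighted divergence theorem. For the test field we take $V(x)=\phi(r(x))\nabla r$ on $\Omega$, extended by $\beta\,\nabla r$ wherever $r>R$; for that region we need $\phi\le\beta$.

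\textbf{Step 2: monotonicity of the comparison.} Compute
\[
\mathrm{div}_hV-|V|^2=\phi'+\Big(\tfrac{(n-1)\cosh r}{\sinh r}+h'\Big)\phi-\phi^2=\lambda_B
\]
on $B(R)$. On $\{r>R\}$ we instead need
\[
\beta\Big(\tfrac{(n-1)\cosh r}{\sinh r}+h'\Big)-\beta^2\ge\lambda_B .
\]
We also need $0\le\phi\le\beta$ on $[0,R]$. With these, each level set $U_t$ satisfies
\[
\int_{U_t}(\mathrm{div}_hV-|V|^2)\,dV\ge \lambda_B V_h(U_t).
\]
The boundary term $\int_{\partial U_t\cap\partial\Omega}\beta\,dA$ together with the interior level part is then compared using $|V|\le\beta$ and the weighted isoperimetric inequality, Theorem~\ref{thm:weighted-isoperimetry}(b): $P_h(U_t)\ge P_h(B_t)$, where $B_t$ is the centered ball with $V_h(B_t)=V_h(U_t)$. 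This is exactly Daners' argument, and it yields $\lambda_1(\Omega,\beta)\ge\lambda_B$.

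\textbf{Step 3: the main obstacle.} The hard part is the ODE analysis behind the sign/monotonicity facts used above: $\phi$ is increasing on $[0,R]$, hence $0\le\phi\le\beta$, and the outer-region inequality holds. The plan is to analyze the Riccati equation by a maximum-principle/comparison argument for $\psi=\phi'$. Differentiating gives
\[
\psi'=2\phi\psi-m\psi-m'\phi,\qquad m=\tfrac{(n-1)\cosh r}{\sinh r}+h' .
\]
So $\psi$ stays positive as long as $m'\le0$, i.e.\ $h''\le (n-1)/\sinh^2 r$. This is where condition (ii) enters.

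Under (i), we instead compare $\phi$ with the linear function $\beta\tanh r/\tanh R$. The quantity $C_2(R)$ is designed so that the Riccati equation keeps $\phi$ below this barrier and makes $m\beta-\beta^2-\lambda_B$ nonnegative for $r\ge R$. For the latter we use $\lambda_B\le\beta\,m(R)-\beta^2$, obtained from $\phi'(R)\ge0$. The $\min\{1,\frac{n-1}{2}\}$ term comes from bounding $(n-1)\coth r-\ldots$ from below by $\tanh$-type expressions. I expect the barrier computation to be the delicate step.

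\textbf{Step 4: equality.} Equality forces equality in the weighted isoperimetric inequality for almost every level set. By strict convexity of $h$ and the uniqueness part of Theorem~\ref{thm:weighted-isoperimetry}(b), each such level set is a centered geodesic ball, and hence so is $\Omega$ up to a null set.
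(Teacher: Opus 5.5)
There is a genuine gap in Step~2, and it is structural rather than a matter of ODE estimates. You transplant the model field through the distance to $o$, $V=\phi(r(x))\nabla r$, and with this choice the level-set machinery of Step~1 adds nothing. Since $\int_{U_t}\mathrm{div}_hV\,e^h d\mu=\int_{\partial U_t}V\cdot\nu\,e^h d\sigma\le\int_{\{v=t\}}|V|e^h d\sigma+\beta \operatorname{P}_\gamma(\Gamma_t)$, the pointwise bound $\mathrm{div}_hV-|V|^2\ge\lambda_B$ on $\Omega$ together with $V\cdot\nu\le\beta$ on $\partial\Omega$ already gives $\lambda_1(\Omega,\beta)\ge\lambda_B$. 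That is a Barta-type calibration. The weighted isoperimetric inequality never enters, because $|V|$ is not constant on the level sets of the eigenfunction $v$ of $\Omega$.

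Such a bound is blind to the volume constraint, which is a warning sign, and your outer-region inequality $\beta m(r)-\beta^2\ge\lambda_B$ for $r>R$ is in fact false. The Riccati equation at $r=R$ gives $\phi'(R)=\lambda_B+\beta^2-\beta m(R)$. So $\phi'(R)\ge0$ is equivalent to $\lambda_B\ge\beta m(R)-\beta^2$, which is the reverse of what you claim. Under (ii) the variation-of-constants formula gives $\phi'(R)>0$. Under (i) with $k>0$, the maximum principle for $\phi-k\tanh r$ gives $\phi'(R)\ge k\tanh'(R)>0$. Either way the outer inequality fails for every $r$ slightly larger than $R$.

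No other extension of $V$ beyond $B(R)$ can rescue the argument either. Extend $u$ past $R$ and take $R'>R$ close to $R$. Then $\phi(R')>\beta$ and $\lambda_B=\lambda_1(B(R'),\phi(R'))>\lambda_1(B(R'),\beta)$. So the calibration would have to prove a false statement for $\Omega=B(R')$.

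The missing idea, which is what the paper does, is to transplant $\phi$ through the level sets of $v$ rather than through $r(x)$:
- Normalize $\max v=1$, set $U_t=\{v>t\}$, and define $r(t)$ by $|B(r(t))|_\gamma=|U_t|_\gamma$.
- Use the scalar test function $\varphi(x)=\phi(r(v(x)))$ in the Bossel--Daners functional $H_\Omega(U_t,\varphi)$.
- Since $\varphi\equiv\phi(r(t))$ on $\{v=t\}$ and $\phi<\beta$, the boundary part is at least $\phi(r(t))\operatorname{P}_\gamma(\partial U_t)\ge\phi(r(t))\operatorname{P}_\gamma(\partial B(r(t)))$, by Theorem~\ref{thm:weighted-isoperimetry}(b).
- The coarea formula gives the equimeasurability $\int_{U_t}\varphi^2e^h d\mu=\int_{B(r(t))}\phi(r)^2e^h d\mu$.
- Hence $H_\Omega(U_t,\varphi)\ge H_{B(R)}(B(r(t)),\phi)=\lambda_B$, and Proposition~\ref{lambda low bound} concludes. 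Its strictness also drives the equality case along the lines of your Step~4.

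Here $\varphi$ only takes values $\phi(s)$ with $s<R$, so no outer-region condition ever arises. The only ODE input needed is $0\le\phi<\beta$ on $[0,R)$, not monotonicity.

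Your treatment of (ii), via $(\phi')'=(2\phi-m)\phi'-m'\phi$ with $m'\le0$, coincides with the paper's. Under (i) the paper does not show that $\phi$ is increasing. It applies the maximum principle to $\phi-k\tanh r$ with $k=\frac{C_2(R)}{2}-\min\{1,\frac{n-1}{2}\}$. The two entries of the minimum make, respectively, the inhomogeneous term and the coefficient of $\phi-k\tanh r$ in its equation nonpositive. The case $k\le0$ is reduced to (ii). Your barrier $\beta\tanh r/\tanh R$ is a strictly stronger claim that you have not verified.
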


\begin{example}
    Li-Xu \cite{LihaizhongPAMS} and Xia-Scheuer \cite{XiachaoTAMS} studied the isoperimetric inequality on $\mathbb{H}^{n}$ with the weighted measure $\cosh r d\mu$, i.e. $h(r)=\log \cosh r$. Then  
    $$h'(r)=\frac{\sinh r}{\cosh r}=\tanh r, \qquad h''(r)=\frac{1}{\cosh^{2}r}\leq \frac{1}{\sinh^{2}r}.$$ 
    Then by Theorem \ref{Hyperbolic space with log convex}, the Faber-Krahn  inequality for Robin weighted Laplacian with density  $\cosh r d\mu$ on $\mathbb{H}^{n}$ holds for any   $\beta>0$.
\end{example}

In \cite{KennegyPAMS}, Kennedy proved that, among all bounded Lipschitz domains of fixed volume, the second Robin eigenvalue of the Laplacian  is minimized by the disjoint union of two equal-volume balls. In \cite{Maojing2025}, Chen and Mao investigated the second eigenvalue of the Witten Laplacian subject to Dirichlet boundary conditions on weighted $\mathbb{R}^n$ and $\mathbb{H}^n$. Building on these works, we establish the following theorem.
\begin{theorem}\label{thm3}
Let $\Omega$ be a bounded Lipschitz domain in $\R^n$ (or $\mathbb{H}^n$), and let $\lambda_2(\Omega,\beta)$ denote the second eigenvalue of \eqref{differential equation}. Assume that $\beta$ satisfies the conditions in Theorem \ref{Euclidean log convex}(or in Theorem \ref{Hyperbolic space with log convex}). For domains satisfying the first set of conditions, we further require that the function
\begin{align*}
    \frac{(n-1)}{r^2}-h''(r) \quad (\text{in $\R^n$} )\quad \text{or}\quad   \frac{(n-1)}{\sinh^2r}-h''(r) \quad (\text{in $\mathbb{H}^n$} )
\end{align*}
 is monotonically decreasing. Then
\begin{align}
    \lambda_2(\Omega,\beta)\ge \lambda_1(D,\beta),
\end{align}
where $D$ is the geodesic ball centered at the origin with  half the weighted volume of $\Omega$.
\end{theorem}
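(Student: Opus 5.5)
We follow Kennedy's nodal-domain argument, adapted to the weighted setting. Let $u_2$ be an eigenfunction for $\lambda_2(\Omega,\beta)$. The first eigenfunction is positive and $u_2$ is orthogonal to it in $L^2(e^{h}d\mu)$, so $u_2$ changes sign. We split $\Omega$ into $\Omega_+=\{u_2>0\}$ and $\Omega_-=\{u_2<0\}$ and put $u_\pm=\max\{\pm u_2,0\}$. We test the weak form of \eqref{differential equation} against $u_\pm$; this is legitimate since $u_\pm\in H^1(\Omega)$. On the interior nodal set $\partial\Omega_\pm\cap\Omega$ the function $u_\pm$ vanishes, so no boundary term appears there. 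This gives
\begin{align*}
\lambda_2(\Omega,\beta)=\frac{\int_{\Omega_\pm}|\nabla u_\pm|^2e^{h}\,d\mu+\beta\int_{\partial\Omega\cap\partial\Omega_\pm}u_\pm^2e^{h}\,d\sigma}{\int_{\Omega_\pm}u_\pm^2e^{h}\,d\mu}.
\end{align*}
Thus $\lambda_2(\Omega,\beta)$ equals a ``mixed'' Rayleigh quotient on each of $\Omega_\pm$: Robin on the part of the boundary lying in $\partial\Omega$, and effectively Dirichlet (with zero trace) on the nodal part.

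Since $u_\pm=0$ on the nodal set, we may add $\beta\int_{\partial\Omega_\pm\cap\Omega}u_\pm^2e^h\,d\sigma=0$ to the numerator. The quotient then becomes the full Robin quotient of $u_\pm$ on the set $\Omega_\pm$. The set $\Omega_\pm$ need not be Lipschitz, so Theorem \ref{Euclidean log convex} (or Theorem \ref{Hyperbolic space with log convex}) cannot be applied directly. Instead we rerun its proof for the function $u_\pm$ on $\Omega_\pm$, in the form it is presumably given there. That proof is a Bossel--Daners type argument: one builds a test vector field or an $H$-functional from the radial eigenfunction of the centered ball $D_\pm$ with the same weighted volume as $\Omega_\pm$. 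One then uses the weighted isoperimetric inequality of Theorem \ref{thm:weighted-isoperimetry} on the level sets $\{u_\pm>t\}$. Only the boundary term on $\partial\{u_\pm>t\}\cap\partial\Omega$ carries weight $\beta$, while the nodal part contributes no less than the Robin term would, since $u_\pm=0$ there. This is the standard observation that lets the Robin Faber--Krahn inequality apply to nodal domains, as in Kennedy and Bucur--Daners. It yields
\begin{align*}
\lambda_2(\Omega,\beta)\ge \lambda_1(D_\pm,\beta),\qquad |D_+|_h=|\Omega_+|_h,\ |D_-|_h=|\Omega_-|_h,
\end{align*}
where $|\cdot|_h$ denotes weighted volume. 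We must check that the hypotheses on $\beta$ and $h$ transfer to the smaller radii. The quantity $C_1(R)$ (respectively $C_2(R)$) is a supremum over $(0,R]$ and is therefore nondecreasing in $R$. In case (i) the right-hand side, $C_1(R)R/2$ (respectively the $\tanh R$ expression), is nondecreasing in $R$, so the condition at radius $R$ implies it at every smaller radius. Condition (ii) on $(0,R)$ likewise restricts to subintervals.

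Finally, $|\Omega_+|_h+|\Omega_-|_h\le|\Omega|_h$, so one of the two, say $\Omega_+$, has weighted volume at most $|\Omega|_h/2$. Weighted Robin eigenvalues of centered balls decrease as the radius grows, so $\lambda_1(D_+,\beta)\ge\lambda_1(D,\beta)$. To see this monotonicity, write the radial first eigenfunction $\phi$ of $D$ and differentiate $\lambda_1(B(r),\beta)$ in $r$ by Hadamard's formula. This gives
\begin{align*}
\frac{d}{dr}\lambda_1(B(r),\beta)=\frac{\bigl(\phi'^2-\lambda\phi^2+\beta\,(\mathrm{mean\ curv.}+h')\phi^2-2\beta^2\phi^2\bigr)e^{h}|\partial B(r)|}{\int\phi^2e^h}\Big|_{r},
\end{align*}
and $\phi'=-\beta\phi$ on the boundary. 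We then need to show that this expression is nonpositive. I would instead argue via a rescaled test function $\phi(\rho\,\cdot)$ on the larger ball combined with a Riccati/ODE comparison for $w=-\phi'/\phi$. This is where the extra hypothesis of Theorem \ref{thm3}, that $\frac{n-1}{r^2}-h''$ (or $\frac{n-1}{\sinh^2r}-h''$) is decreasing, should enter. It guarantees that $w(r)<\beta$ propagates correctly and that the radial profile gives the monotonicity under condition (i). I expect this monotonicity step, together with rigorously justifying the Faber--Krahn argument on the non-Lipschitz nodal domains, to be the main obstacle. For the latter I would approximate the nodal domains by the sets $\{u_\pm>\varepsilon\}$.
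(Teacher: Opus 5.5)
\textbf{The gap: monotonicity of $\rho\mapsto\lambda_1(B(\rho),\beta)$.} You flag this final step as the main obstacle, but you leave it open. You write down the Hadamard formula and then switch to a rescaling argument with $\phi(\rho\,\cdot)$. That argument does not carry over. The weight $e^{h(r)}$ is not dilation invariant, and dilations are not isometries of $\mathbb{H}^n$, so the rescaled function no longer solves the weighted problem and the usual scaling comparison breaks down.

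The paper instead uses a restriction argument (Lemma~\ref{lamba monotonicity}). For $r_1<r_2$, restrict the radial eigenfunction $w$ of $B(r_2)$ to $B(r_1)$. There it is a positive, hence first, Robin eigenfunction, with parameter $-w'(r_1)/w(r_1)=v(r_1)$. Since $v(r_1)\le\beta$ and $\lambda_1$ is increasing in the Robin parameter,
\[
\lambda_1(B(r_2),\beta)=\lambda_1\big(B(r_1),v(r_1)\big)\le\lambda_1(B(r_1),\beta).
\]
The paper takes $v(r_1)\le\beta$ from Corollary~\ref{Monotonicity of v}, which is where the extra decreasing hypothesis enters. For this step, however, the bound $v<v(r_2)=\beta$ of Propositions~\ref{prop:Euclidean_v_max} and~\ref{prop:Hyperbolic_v_max}, applied on $B(r_2)$ with $r_2\le R$, would already suffice.

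Your own Hadamard formula also closes at once. Insert the Riccati equation for $v=-\phi'/\phi$ at $r=\rho$, where $v(\rho)=\beta$. The bracket becomes $-\phi(\rho)^2v'(\rho)$, so
\[
\frac{d}{d\rho}\lambda_1(B(\rho),\beta)=-\frac{\phi(\rho)^2S(\rho)^{n-1}e^{h(\rho)}}{\int_0^\rho\phi^2S^{n-1}e^{h}\,ds}\,v'(\rho)\le0 ,
\]
because $v<\beta=v(\rho)$ on $[0,\rho)$ forces $v'(\rho)\ge0$. Neither observation is in your proposal, so $\lambda_1(D_+,\beta)\ge\lambda_1(D,\beta)$ remains unproved.

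\textbf{The nodal domains: a valid alternative route.} The paper follows Kennedy. It enlarges $\Omega^+$ to $U_\epsilon=\Omega^+\cup S_\epsilon$ and bounds $\lambda_2$ below by a mixed eigenvalue $\Lambda_1(U_\epsilon,\beta)$. It then approximates by Lipschitz domains $U^m$ with $\Lambda_1(U^m,\beta)\to\Lambda_1(U_\epsilon,\beta)$ and applies Theorems~\ref{Euclidean log convex}/\ref{Hyperbolic space with log convex} to $U^m$ as black boxes.

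Rerunning the level-set argument on $u_+$ avoids all of this. For $t>0$, the set $\{u_+>t\}=\{u_2>t\}$ is a superlevel set in the Lipschitz domain $\Omega$. Its boundary lies in $S(t)\cup\Gamma_t$ and never meets the nodal set. That, rather than any comparison of the nodal part with a Robin term, is why the argument works. Propositions~\ref{lambda representation 3.1}--\ref{lambda low bound} and Lemma~\ref{lem:Rayleigh_quotient_inequality} then hold verbatim, with $\psi=u_+/\max u_+$, $\lambda_2$ in place of $\lambda_1(\Omega,\beta)$, and $D_+$ in place of $B(R)$.

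Your closing plan to approximate by $\{u_\pm>\varepsilon\}$ is therefore unnecessary. It would not help anyway: those sets need not be Lipschitz, and $u_\pm-\varepsilon$ satisfies no Robin condition.

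Your check that the hypotheses pass to smaller radii is correct, and the paper uses it tacitly. One small correction: in the hyperbolic case, the threshold $k\tanh R$ is not monotone when $k<0$, but then the condition is vacuous for $\beta>0$.
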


The main novelty of this work lies in establishing  complete Faber-Krahn inequalities for the Robin eigenvalues of the weighted Laplacian under radially log-convex measures on both Euclidean and hyperbolic spaces. Although the Faber-Krahn inequality for the Robin Laplacian in the unweighted setting is now well understood, the weighted Robin case presents substantial new challenges. For instance, in the unweighted case, the key technical tool is the monotonicity of 
$v(r):=-u'(r)/u(r)$, which ensures the validity of the level-set rearrangement. For general radially log-convex weights, this monotonicity does not hold automatically. The paper introduces two sufficient conditions-one involving a lower bound  on $\beta$, the other based on a pointwise  constraint on density function $h$-that rigorously restore the property 
$v(r)<v(R)$. This constitutes the central technical breakthrough. Naturally, since the present paper focuses primarily on the case of positive Robin parameter, it is equally natural to investigate the corresponding problem for negative Robin parameters in the setting of weighted measures. We plan to address this negative-parameter case in a future work.

The paper is organized as follows. In Section \ref{sect2}, we introduce the necessary notation and preliminary results on weighted Sobolev spaces and eigenfunctions. In Section \ref{sect3}, we derive a representation formula for the first eigenvalue and prove a key strict inequality lemma. Section \ref{sect4} is devoted to the analysis of eigenvalues on geodesic balls, including the radial monotonicity of eigenfunctions and the maximum principle for the associated function $v(r)$. In Section \ref{sect5}, we prove Theorems \ref{Euclidean log convex} and \ref{Hyperbolic space with log convex}. Finally, in Section \ref{sect6}, we prove Theorem \ref{thm3}.

\section{Preliminaries}\label{sect2}

Throughout this paper, let $M^n$ denote either the $n$-dimensional Euclidean space $\mathbb{R}^n$ or the $n$-dimensional hyperbolic space $\mathbb{H}^n$. Let $S(t)$ be the sine-type function defined
\begin{align*}
    S(t)=\begin{cases}
    t, &\quad \text{if} \quad M^n= \R^n,\\
     \sinh t, &\quad \text{if} \quad M^n=\mathbb{H}^n,
    \end{cases}
\end{align*}
and let  $C(t):=S'(t)$ denote the corresponding the cosine-type function.
Fix a  base point $o \in M^n$ (the origin). Given a positive density function $f$ on $M^n$, and a   set $A \subset M^n$ with locally finite perimeter, we define the weighted perimeter and weighted volume, respectively by
\begin{align*}
\operatorname{P}_{f}(A) = \int_{\partial^* A} f \, d\sigma, \qquad
\operatorname{Vol}_{f}(A) = \int_{A} f \, d\mu,
\end{align*}
where $d\mu$ denotes the $n$-dimensional standard Riemannian volume meausre in $\R^n$ or $\mathbb{H}^n$, $d\sigma$ is the induced $(n-1)$-dimensional Hausdorff measure on the reduced boundary  $\partial^* A$.

A density function $f \colon M^n \to \mathbb{R}_{>0}$ is said to be \emph{(strictly) radially log-convex} if  it can be written as $$f(x)=e^{h(r(x))}$$ and
 $h \colon \mathbb{R} \to \mathbb{R}$ is  smooth, (strictly) convex, and even.
Where $r(x) = d(o,x)$ is the  distance function from the orgin $o$,  In  what follows, we denote the weighted measure  with density  $e^{h(r(x))} $ by $$d\gamma = e^{h(r(x))} \, d\mu.$$

Let $\Omega \subset M^n$ be a bounded domain. We  denote by $L^2(\Omega, \gamma)$ the space of all real-valued measurable functions on $\Omega$ such that
\begin{align*}
\|u\|_{L^2(\Omega, \gamma)}^2 := \int_{\Omega} u^2(x) \, d\gamma< +\infty.
\end{align*}
The associated weighted Sobolev space is defined as
\begin{align*}
H^1(\Omega, \gamma) := \left\{ u \in W^{1,2}_{\mathrm{loc}}(\Omega) : u, |\nabla u| \in L^2(\Omega, \gamma) \right\},
\end{align*}
 equipped with the norm
\begin{align*}
\|u\|_{H^1(\Omega, \gamma)} = \|u\|_{L^2(\Omega, \gamma)} + \|\nabla u\|_{L^2(\Omega, \gamma)}.
\end{align*}
Since $\Omega$ is bounded and $h$ is smooth,  there exist positive constants $c_1$ and $c_2$ such that
\begin{align*}
c_1 \le e^{h(r(x))} \le c_2 \quad \text{for all } x \in \Omega.
\end{align*}
Consequently, the weighted $L^2$-norm $\|\cdot\|_{L^2(\Omega,\gamma)}$ is equivalent to the standard $L^2(\Omega)$ norm. In particular, the weighted Sobolev space 
 $H^1(\Omega, \gamma)$ coincides with the usual Sobolev space $H^1(\Omega)$ as a set, and their norms are equivalent.

\begin{lemma}\label{existence and positive}
   Assume $\beta>0$. Then $\lambda_{1}(\Omega,\beta) > 0$, and the infimum in \eqref{variational representation} is attained by a strictly positive eigenfunction.
\end{lemma}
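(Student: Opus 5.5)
We will use the direct method of the calculus of variations on the Rayleigh quotient in \eqref{variational representation}, exploiting that $H^1(\Omega,\gamma)=H^1(\Omega)$ with equivalent norms because $c_1\le e^{h(r)}\le c_2$ on $\overline\Omega$.

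\emph{Positivity.} Since $\beta>0$, the quotient is nonnegative, so $\lambda_1(\Omega,\beta)\ge 0$. If $\lambda_1(\Omega,\beta)=0$, we take the minimizer $u$ produced below. Its quotient vanishes, which forces $\nabla u=0$ a.e. and $\int_{\partial\Omega}u^2e^{h}\,d\sigma=0$. Because $\Omega$ is connected, $u$ is a constant $c$. The trace of $c$ vanishes on $\partial\Omega$, which has positive surface measure, so $c=0$. This contradicts $u\neq0$, hence $\lambda_1(\Omega,\beta)>0$.

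\emph{Existence.} Take a minimizing sequence $u_k$ normalized by $\int_\Omega u_k^2\,d\gamma=1$. The numerators are bounded, and both terms are nonnegative, so $\int|\nabla u_k|^2\,d\gamma$ is bounded. By norm equivalence, $(u_k)$ is bounded in $H^1(\Omega)$. Since $\Omega$ is a bounded Lipschitz domain, the Rellich–Kondrachov theorem and compactness of the trace map $H^1(\Omega)\to L^2(\partial\Omega)$ let us extract a subsequence with $u_k\rightharpoonup u$ weakly in $H^1$, $u_k\to u$ in $L^2(\Omega)$, and $u_k|_{\partial\Omega}\to u|_{\partial\Omega}$ in $L^2(\partial\Omega)$. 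Because the weight is bounded above and below, these convergences transfer to the weighted norms. Hence $\int u^2\,d\gamma=1$, and the boundary term converges. The Dirichlet term $\int|\nabla u|^2e^h\,d\mu$ is weakly lower semicontinuous, being the square of a norm equivalent to the weighted one. Therefore $u$ attains the infimum, and the Euler–Lagrange equation gives the weak form of \eqref{differential equation}.

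\emph{Strict positivity.} Since $|u|\in H^1$ has the same quotient as $u$, we may assume $u\ge0$, and $|u|$ is again an eigenfunction. Interior elliptic regularity applies because the coefficients $e^{h(r)}$ are smooth, so $u$ is smooth in $\Omega$. It solves $-\mathrm{div}(e^h\nabla u)=\lambda e^h u\ge0$, so it is a nonnegative supersolution of a uniformly elliptic divergence-form equation. The strong maximum principle, or the weak Harnack inequality of Trudinger for $H^1$ supersolutions, gives either $u\equiv0$ or $u>0$ in $\Omega$. The first alternative is excluded by the normalization, so $u>0$ in $\Omega$.

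\emph{Main obstacle.} The delicate point is the low boundary regularity of a Lipschitz domain. We handle it by using only interior regularity for positivity, and by relying on compactness of the trace for Lipschitz domains in the existence step; no other input is needed.
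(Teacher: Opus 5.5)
Your proposal is correct and follows essentially the same route as the paper. Both use the direct method with a normalized minimizing sequence, Rellich--Kondrachov plus the compact trace embedding, the contradiction from $\lambda_1=0$ forcing $\nabla u=0$ and a vanishing trace, and replacement of $u$ by $|u|$ followed by the strong maximum principle; your explicit appeal to the connectedness of $\Omega$, to conclude that $u$ is constant, fills in a step the paper leaves implicit.
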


\begin{proof}
This follows from a standard compactness argument, and for completeness we give a proof here.
Let $$\mathcal{K} := \left\{ u \in H^1(\Omega,\gamma) : \int_{\Omega} u^2(x)  \, d\gamma = 1 \right\},$$
and define the energy functional $\Phi : \mathcal{K} \to \mathbb{R}$ by
\begin{align*}
\Phi(u) = \int_{\Omega} |\nabla u|^2 e^{h(r(x))} \, d\mu + \beta \int_{\partial\Omega} u^2 e^{h(r(x))} \, d\sigma.
\end{align*}
Let $\{u_i\} \subset \mathcal{K}$ be a minimizing sequence such that $\Phi(u_i) \to \lambda_{1}(\Omega,\beta)$. Since the weighted and standard Sobolev norms are equivalent, $\{u_i\}$ is bounded in the standard $H^1(\Omega)$. By the Rellich-Kondrachov compactness theorem and the compact trace embedding, there exists a subsequence (still denoted by $\{u_i\}$) and a function $u \in H^1(\Omega)$ such that $u_i \rightharpoonup u$ weakly in $H^1(\Omega)$, $u_i \to u$ strongly in $L^2(\Omega)$, and $u_i \to u$ strongly in $L^2(\partial\Omega)$. 

The strong convergences in $L^2$ imply that $u \in \mathcal{K}$. The weak lower semicontinuity of the Dirichlet integral and the strong convergence of the boundary trace yield
\begin{align}
    \Phi(u) \leq \liminf_{i \to \infty} \Phi(u_i) = \lambda_{1}(\Omega,\beta).
\end{align}
Since $u \in \mathcal{K}$, we also have $\lambda_{1}(\Omega,\beta) \leq \Phi(u)$. Thus, $\lambda_{1}(\Omega,\beta) = \Phi(u)$, meaning $u$ is a minimizer. 

Furthermore, $\lambda_1(\Omega, \beta) > 0$. Indeed, if $\lambda_1 = 0$, then $\Phi(u) = 0$, which implies $\nabla u = 0$ a.e. in $\Omega$ and $u = 0$ on $\partial\Omega$ (since $\beta > 0$). This would force $u \equiv 0$, contradicting $u \in \mathcal{K}$.

Finally, since $\Phi(|u|) = \Phi(u)$, we may assume without loss of generality that $u \geq 0$. By standard elliptic regularity and the strong maximum principle (see, e.g., \cite[Chapter 2, Theorem 5]{Protter}), we conclude that $u(x) > 0$ for all $x \in \Omega$.
\end{proof}

\begin{lemma}\label{lem:regularity}
    Let $\Omega$ be a bounded Lipschitz domain and $\psi$ be the first eigenfunction of \eqref{differential equation}. Then $\psi \in C(\overline{\Omega}) \cap C^1(\Omega)$.
\end{lemma}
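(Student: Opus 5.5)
The plan is to reduce Lemma~\ref{lem:regularity} to known regularity theory for divergence-form elliptic equations with bounded measurable (here smooth) coefficients and Robin boundary conditions on Lipschitz domains. Since $h$ is smooth and $\overline{\Omega}$ is compact, the weight $a(x):=e^{h(r(x))}$ is smooth and satisfies $c_1\le a\le c_2$ on $\overline\Omega$. Write the weak form of \eqref{differential equation}:
\begin{align*}
\int_\Omega a\,\nabla\psi\cdot\nabla\varphi\,d\mu+\beta\int_{\partial\Omega}a\,\psi\varphi\,d\sigma=\lambda_1\int_\Omega a\,\psi\varphi\,d\mu\qquad\text{for all }\varphi\in H^1(\Omega).
\end{align*}
This is a uniformly elliptic problem in divergence form, $-\mathrm{div}(a\nabla\psi)=\lambda_1 a\psi$, with a boundary term of Robin type whose coefficient $\beta a|_{\partial\Omega}$ is bounded and nonnegative. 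In $\mathbb{H}^n$ we work in a chart (say the Poincaré ball model) covering $\overline\Omega$. In such a chart the metric coefficients are smooth and uniformly elliptic on $\overline\Omega$, so the same reasoning applies with $a\sqrt{\det g}\,g^{ij}$ as the coefficient matrix.

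\textbf{Interior regularity.} Testing with compactly supported $\varphi$ shows that $\psi$ is a weak solution of $-\partial_i(A^{ij}\partial_j\psi)=\lambda_1 a\sqrt{\det g}\,\psi$ in $\Omega$, with smooth coefficients. Standard interior $H^k$ estimates then give $\psi\in H^k_{\mathrm{loc}}(\Omega)$ for every $k$. By Sobolev embedding, $\psi\in C^\infty(\Omega)$, and in particular $\psi\in C^1(\Omega)$. Alternatively, De Giorgi--Nash plus Schauder estimates give the same conclusion.

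\textbf{Continuity up to the boundary.} This is the main obstacle, since $\partial\Omega$ is only Lipschitz and no $C^1$ boundary regularity can be expected. I would proceed in two steps.

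\emph{Step 1: global boundedness.} Run a Moser iteration (or De Giorgi truncation) for the Robin problem, as in Daners' work on Robin problems on arbitrary/Lipschitz domains. Testing with $\psi|\psi|^{p-2}$ truncated, and using $\beta>0$ so that the boundary term has a good sign, together with the Sobolev inequality on the Lipschitz domain $\Omega$ (an extension domain), yields $\psi\in L^\infty(\Omega)$.

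\emph{Step 2: Hölder continuity up to the boundary.} Regard $\psi$ as a bounded weak solution of an oblique/Robin problem with bounded right-hand side $f=\lambda_1 a\psi\in L^\infty$. Apply the boundary De Giorgi--Nash--Moser theory for mixed/Robin problems on Lipschitz domains (e.g.\ Nittka's results on Robin problems in Lipschitz domains, or Warma's and Daners' Hölder estimates). These give $\psi\in C^{0,\alpha}(\overline\Omega)$ for some $\alpha>0$, hence $\psi\in C(\overline\Omega)$.

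The key point to check in Step 2 is that these references allow variable smooth coefficients and a bounded Robin coefficient. They do: their hypotheses are merely uniform ellipticity, bounded measurable coefficients, and a Lipschitz boundary. If a direct citation is preferred, one can flatten the boundary locally by a bi-Lipschitz map; the resulting coefficients are still bounded measurable and uniformly elliptic, and then De Giorgi's oscillation lemma applies to truncations $(\psi-k)^\pm$ on half-balls, with the boundary integral controlled by the trace inequality.

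Combining the interior and boundary steps gives $\psi\in C(\overline\Omega)\cap C^1(\Omega)$.
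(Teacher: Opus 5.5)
Your proposal is correct and follows essentially the same route as the paper. The paper gets $C^1(\Omega)$ from interior elliptic regularity (citing Gilbarg--Trudinger, Theorem 8.10, the interior $H^k$ estimates you use) and continuity up to $\partial\Omega$ from boundary regularity theory for elliptic equations on Lipschitz domains. You spell out that second step in more detail, via a global $L^\infty$ bound followed by De Giorgi--Nash--Moser H\"older estimates for the Robin problem, where the paper only gives a citation.
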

\begin{proof}
    Since the weight $e^{h(r(x))}$ is smooth and strictly positive, standard interior elliptic regularity theory (e.g., \cite[Theorem 8.10]{Trudinger}) implies that $\psi \in C^1(\Omega)$. The global continuity $\psi \in C(\overline{\Omega})$ up to the boundary follows from the boundary regularity theory for elliptic equations on Lipschitz domains (see, e.g., \cite[pp. 466--467]{Ladyzhenskaya}).
\end{proof}
\begin{definition}\label{def:level_sets}
Let $\psi$ denote the positive first eigenfunction of $\Omega$, normalized such that $\|\psi\|_{L^\infty(\Omega)} = 1$. For each $t \in (0,1)$, we define the superlevel sets and their boundaries as follows:
\begin{align*}
U(t) &:= \{x \in \Omega : \psi(x) > t\}, \\
S(t) &:= \{x \in \Omega : \psi(x) = t\}, \\
\Gamma_{t} &:= \{x \in \partial\Omega : \psi(x) > t\}.
\end{align*}
\end{definition}
For any nonnegative measurable function $\varphi \colon \Omega \to [0,\infty)$, we define the functional
\begin{align}\label{eq:H_Omega}
H_{\Omega}(U(t),\varphi) := \frac{1}{|U(t)|_\gamma} \left( \int_{S(t)} \varphi e^{h(r(x))} \, d\sigma
+ \int_{\Gamma(t)} \beta e^{h(r(x))} \, d\sigma
- \int_{U(t)} \varphi^2(x) e^{h(r(x))} \, d\mu \right),
\end{align}
where 
$$
|U(t)|_{\gamma} := \int_{U(t)} e^{h(r(x))} \, d\mu
$$
denotes the weighted volume of $U(t)$.

\section{A Representation Formula for $\lambda_{1}$}\label{sect3}
In this section,  $\Omega$ is a bounded Lipschitz domain in  either $\mathbb{R}^n$ or $\mathbb{H}^n$, equipped with the radial weight $e^{h(r(x))}$.

\begin{prop}\label{lambda representation 3.1}
    Let $\psi>0$ be the positive first eigenfunction of \eqref{differential equation} corresponding to $\lambda_{1}(\Omega,\beta)$, normalized by $\|\psi\|_{L^\infty(\Omega)} = 1$. Then for every $t\in (0,1)$, we have
    \begin{align}\label{eq:lambda_rep}
        \lambda_{1}(\Omega,\beta) = H_{\Omega}\!\left(U(t), \frac{|\nabla \psi|}{\psi}\right).
    \end{align}
\end{prop}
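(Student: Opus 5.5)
Our plan is to integrate the equation over the superlevel set $U(t)$ and use the divergence theorem, after first passing to the logarithm of $\psi$. Set $w=\log\psi$, which is well defined and $C^1$ in $\Omega$ by Lemma \ref{existence and positive} and Lemma \ref{lem:regularity}. A direct computation from \eqref{differential equation} gives, in the weak sense,
\begin{align*}
-\mathrm{div}\big(e^{h}\nabla w\big)=\lambda_1(\Omega,\beta)\,e^{h}+e^{h}|\nabla w|^2 ,
\end{align*}
because $\mathrm{div}(e^h\nabla\psi/\psi)=\mathrm{div}(e^h\nabla\psi)/\psi-e^h|\nabla\psi|^2/\psi^2$. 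Integrating this identity over $U(t)$ yields
\begin{align*}
\lambda_1(\Omega,\beta)\,|U(t)|_\gamma=-\int_{\partial U(t)}e^{h}\,\frac{\partial_\nu\psi}{\psi}\,d\sigma-\int_{U(t)}\frac{|\nabla\psi|^2}{\psi^2}\,e^{h}\,d\mu .
\end{align*}
The boundary $\partial U(t)$ splits into the interior part $S(t)$ and the part $\Gamma_t$ on $\partial\Omega$. On $S(t)$ the outward normal of $U(t)$ is $-\nabla\psi/|\nabla\psi|$, so $-\partial_\nu\psi/\psi=|\nabla\psi|/\psi$. On $\Gamma_t$ the Robin condition gives $-\partial_\nu\psi/\psi=\beta$. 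Substituting these two facts gives exactly $H_\Omega(U(t),|\nabla\psi|/\psi)$, with $\varphi=|\nabla\psi|/\psi$.

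To make this rigorous without smoothness of $S(t)$ or $\partial\Omega$, we will test the weak formulation instead of applying the divergence theorem directly. Take the test function $\eta_\varepsilon=\min\{(\psi-t)^+/\varepsilon,1\}\cdot\psi^{-1}$, which lies in $H^1(\Omega)$ since $\psi\ge t>0$ on its support. The weak form reads
\begin{align*}
\int_\Omega e^h\nabla\psi\cdot\nabla\eta\,d\mu+\beta\int_{\partial\Omega}e^h\psi\eta\,d\sigma=\lambda_1\int_\Omega e^h\psi\eta\,d\mu .
\end{align*}
Expanding the gradient of $\eta_\varepsilon$ produces three contributions. The first is $-\int \chi_\varepsilon |\nabla\psi|^2\psi^{-2}e^h$, where $\chi_\varepsilon=\min\{(\psi-t)^+/\varepsilon,1\}$. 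The second is $\varepsilon^{-1}\int_{\{t<\psi<t+\varepsilon\}}|\nabla\psi|^2\psi^{-1}e^h$. The third is the boundary term $\beta\int_{\partial\Omega}\chi_\varepsilon e^h$.

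We then let $\varepsilon\to0$. The volume terms converge by monotone convergence to the integrals over $U(t)$, since $|\{\psi>t\}\setminus U(t)|$ is trivial. The boundary term converges to $\beta\int_{\Gamma_t}e^h\,d\sigma$, using continuity of $\psi$ up to $\partial\Omega$. The middle term is handled by the coarea formula:
\begin{align*}
\frac1\varepsilon\int_t^{t+\varepsilon}\int_{S(s)}\frac{|\nabla\psi|}{s}e^h\,d\sigma\,ds\longrightarrow\int_{S(t)}\frac{|\nabla\psi|}{\psi}e^h\,d\sigma .
\end{align*}

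We expect this last limit to be the main obstacle. The function $s\mapsto\int_{S(s)}|\nabla\psi|e^h\,d\sigma$ need not be continuous at every $s$ (for instance, if $\{\nabla\psi=0\}$ has positive measure on a level set, or at critical values). The plan is therefore to establish the identity first for almost every $t$, namely the Lebesgue points of that function. We would then extend to every $t$ in one of two ways. One option is to interpret $\int_{S(t)}$ via the right-continuous version. The other is to note that the remaining terms in \eqref{eq:lambda_rep} are one-sided continuous in $t$, and that the flux $-\int_{\partial U(t)}e^h\partial_\nu\psi\,d\sigma=\lambda_1\int_{U(t)}\psi e^h$ (obtained from the same test with $\eta=\chi_\varepsilon$) is continuous in $t$, which pins down the $S(t)$ term for every $t$.

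Real-analyticity of $\psi$ in the interior (the coefficients are smooth) can also be invoked: it shows $|\{\nabla\psi=0\}|=0$ and that level sets have measure zero, which makes this extension straightforward.
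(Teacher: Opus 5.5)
The formal computation in your first paragraph is exactly the paper's proof. The paper applies the divergence theorem to $e^{h}\nabla\psi/\psi$ on $U(t)$, uses the normal $-\nabla\psi/|\nabla\psi|$ on $S(t)$ and the Robin condition on $\Gamma_t$, and stops there, without addressing the regularity of $S(t)$ or $\partial\Omega$. Your weak-form argument with $\eta_\varepsilon=\chi_\varepsilon/\psi$ is a genuine improvement. Monotone convergence, the trace term, and the coarea formula at Lebesgue points of $g(s):=\int_{S(s)}|\nabla\psi|e^{h}\,d\sigma$ together give \eqref{eq:lambda_rep} rigorously for almost every $t\in(0,1)$.

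The gap is the final step, extending this to every $t$. Continuity of $t\mapsto\lambda_1\int_{U(t)}\psi e^{h}\,d\mu$ only determines the limits of the averages $\varepsilon^{-1}\int_t^{t+\varepsilon}g(s)\,ds$, never the pointwise value $g(t)$. Interior analyticity does not help either, because the defect sits on $\partial\Omega$. The map $t\mapsto\int_{\Gamma_t}e^{h}\,d\sigma$ is only right-continuous, and it jumps whenever $\psi|_{\partial\Omega}$ has a level set of positive surface measure; the interior set $S(t)$ cannot absorb that jump.

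In fact no argument can close this step, since the identity fails at such levels. Take $\Omega=B(R)$ centered at $o$. By Lemma~\ref{lem:radial_decreasing}, $\psi$ is radial and strictly decreasing, so at $t_0:=\psi|_{\partial B(R)}\in(0,1)$ you get $U(t_0)=B(R)$ and $S(t_0)=\Gamma_{t_0}=\emptyset$. Hence $H_{\Omega}(U(t_0),|\nabla\psi|/\psi)=-|B(R)|_\gamma^{-1}\int_{B(R)}|\nabla\psi|^2\psi^{-2}e^{h}\,d\mu<0<\lambda_1(\Omega,\beta)$. Your weak identity is still true at $t_0$, but its middle term tends to $\beta\int_{\partial B(R)}e^{h}\,d\sigma$, not to $g(t_0)/t_0=0$.

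So your first option (replacing the $S(t)$-term by a right limit) proves a different statement, and your second option would establish something false. What is actually true, and what you have proved, is the almost-every-$t$ version. That is all Propositions~\ref{Key prop} and \ref{lambda low bound} use. The paper's ``for every $t$'' is an overstatement that its formal proof does not detect.
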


\begin{proof}
For  $t\in (0, 1)$. Applying the divergence theorem to the vector field $\frac{\nabla \psi}{\psi} e^{h(r(x))}$ on the domain $U(t)$, we obtain
    \begin{align}\label{3.2}
        &\int_{S(t)}\frac{\nabla \psi}{\psi}\cdot \nu \, e^{h}\,d\sigma + \int_{\Gamma_{t}}\frac{\nabla \psi}{\psi}\cdot \nu \, e^{h}\,d\sigma \\
        =& \int_{U(t)}\operatorname{div}\!\left(\frac{\nabla \psi}{\psi}e^{h}\right) d\mu \nonumber \\
        =& \int_{U(t)}\left(\frac{\Delta \psi}{\psi}-\frac{|\nabla \psi|^{2}}{\psi^{2}}+\frac{\nabla \psi \cdot \nabla h}{\psi}\right)e^{h}\,d\mu. \nonumber \\
        =&-\lambda_{1}(\Omega,\beta)|U(t)|_{\gamma} - \int_{U(t)}\frac{|\nabla \psi|^{2}}{\psi^{2}}e^{h}\,d\mu,\nonumber
    \end{align}
    where in the last equality, we used the weighted eigenvalue equation $$-\Delta \psi - \nabla \psi \cdot \nabla h = \lambda_1(\Omega,\beta) \psi.$$
We now evaluate the boundary integrals. On the interior level set $S(t)$, the outward unit normal to $U(t)$ points in the direction of decreasing $\psi$, so $\nu = -\frac{\nabla \psi}{|\nabla \psi|}$. Hence
    \begin{align}\label{3.3}
        \int_{S(t)}\frac{\nabla \psi}{\psi}\cdot \nu \, e^{h}\,d\sigma = -\int_{S(t)}\frac{|\nabla\psi|}{\psi}e^{h}\,d\sigma.
    \end{align}
On the boundary portion $\Gamma(t)$, $\nu$ is the outward unit normal to $\Omega$. The Robin boundary condition $\frac{\partial \psi}{\partial \nu} + \beta \psi = 0$, gives$\frac{\nabla \psi}{\psi} \cdot \nu = -\beta$, and therefore
    \begin{align}\label{3.4}
        \int_{\Gamma_{t}}\frac{\nabla \psi}{\psi}\cdot \nu \, e^{h}\,d\sigma = -\int_{\Gamma_{t}}\beta \, e^{h}\,d\sigma.
    \end{align}
Substituting \eqref{3.3} and \eqref{3.4} into \eqref{3.2}, we get
\begin{align}\label{eq:lambda_rep_expanded}
    \lambda_{1}(\Omega,\beta) &= \frac{1}{|U(t)|_{\gamma}}\left(\int_{S(t)}\frac{|\nabla \psi|}{\psi} e^{h}\,d\sigma + \int_{\Gamma_{t}}\beta \, e^{h}\,d\sigma - \int_{U(t)}\frac{|\nabla \psi|^{2}}{\psi^{2}}e^{h}\,d\mu\right) \\
    &= H_{\Omega}\!\left(U(t), \frac{|\nabla \psi|}{\psi}\right), \nonumber 
\end{align}
which completes the proof.
\end{proof}

\begin{prop}\label{Key prop}
Let $\varphi:\Omega\to [0,\infty)$ be a function in $L^{2}(\Omega,\gamma)$, and let $\psi>0$ be the positive first eigenfunction corresponding to $\lambda_{1}(\Omega,\beta)$. Define
\begin{align}
    w := \varphi - \frac{|\nabla \psi|}{\psi}, \qquad F(t) := \int_{U(t)}w\,\frac{|\nabla\psi|}{\psi}e^{h}\,d\mu.
\end{align}
Then $F:(0,1)\to \mathbb{R}$ is locally absolutely continuous. Moreover, for almost every $t\in(0,1)$, the following inequality holds:
\begin{align}\label{key inequality}
    H_{\Omega}(U(t),\varphi) \leq \lambda_{1}(\Omega,\beta) - \frac{1}{t\,|U(t)|_\gamma}\frac{d}{dt}\big(t^{2}F(t)\big).
\end{align}
Furthermore, if the set where $\varphi$ differs from $\frac{|\nabla\psi|}{\psi}$ has positive weighted measure in $U(t)$, i.e.,
\begin{align*}
\left|\left\{x\in U(t): \varphi(x)\neq \frac{|\nabla\psi(x)|}{\psi(x)}\right\}\right|_{\gamma} > 0,
\end{align*}
then inequality \eqref{key inequality} is strict.
\end{prop}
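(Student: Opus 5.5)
This is the weighted version of the classical Bossel--Daners argument. The plan is to compare $H_\Omega(U(t),\varphi)$ with the representation of $\lambda_1$ in Proposition~\ref{lambda representation 3.1}, and then to express the resulting error terms through the derivative of $F$ using the coarea formula.

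\textbf{Step 1: take the difference.} Write $\Phi:=|\nabla\psi|/\psi$, so that $\varphi=\Phi+w$. Subtracting \eqref{eq:lambda_rep} from the definition \eqref{eq:H_Omega}, the $\Gamma_t$ terms cancel and we get
\begin{align*}
|U(t)|_\gamma\big(H_\Omega(U(t),\varphi)-\lambda_1(\Omega,\beta)\big)=\int_{S(t)} w\,e^{h}\,d\sigma-\int_{U(t)}\big(2w\Phi+w^2\big)e^{h}\,d\mu .
\end{align*}
Dropping $-\int_{U(t)} w^2e^h\,d\mu\le 0$ leaves
\begin{align*}
\int_{S(t)} w\,e^{h}\,d\sigma-2F(t).
\end{align*}
If $w\ne0$ on a subset of $U(t)$ of positive $\gamma$-measure, this dropped term is strictly negative. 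That gives the strictness claim, provided the remaining identity holds at that $t$.

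\textbf{Step 2: differentiate $F$ by coarea.} Since $\psi\in C^1(\Omega)$ (Lemma~\ref{lem:regularity}), apply the coarea formula to $\psi$ on $\Omega$:
\begin{align*}
F(t)=\int_t^1\!\!\int_{S(s)} \frac{w\,\Phi}{|\nabla\psi|}\,e^h\,d\sigma\,ds=\int_t^1\frac1s\int_{S(s)} w\,e^h\,d\sigma\,ds .
\end{align*}
On the critical set $\{\nabla\psi=0\}$ the integrand $w\Phi$ vanishes, so that set contributes nothing. Integrability of $w\Phi=\varphi\Phi-\Phi^2$ follows from $\varphi\in L^2(\Omega,\gamma)$ and $\Phi\in L^2(U(t),\gamma)$. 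The latter holds because $\psi\ge t$ on $U(t)$ and $\psi\in H^1$.

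Hence the inner function $s\mapsto s^{-1}\int_{S(s)}we^h\,d\sigma$ is in $L^1_{\rm loc}(0,1)$. It follows that $F$ is locally absolutely continuous, and for a.e.\ $t$,
\begin{align*}
F'(t)=-\frac1t\int_{S(t)} w\,e^h\,d\sigma .
\end{align*}

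\textbf{Step 3: combine.} Then
\begin{align*}
\int_{S(t)}we^h\,d\sigma-2F(t)=-tF'(t)-2F(t)=-\frac1t\,\frac{d}{dt}\big(t^2F(t)\big).
\end{align*}
Dividing by $|U(t)|_\gamma$ gives \eqref{key inequality}, with strict inequality under the stated hypothesis.

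\textbf{Main obstacle.} The delicate point is the measure-theoretic justification in Step 2: integrability of $\Phi$ near $\{\psi\le t\}$, and the behaviour of $\psi$ near $\partial\Omega$, where it is only continuous. I would handle this by working on $U(t)$, with $t$ bounded below on compact subintervals of $(0,1)$. Interior $C^1$ regularity then suffices for the coarea formula on the open set $\Omega$, since $\partial\Omega$ has $\mu$-measure zero. I would also note that $\psi$ is continuous on $\overline\Omega$, so $\bigcup_{s}S(s)$ partitions $\{\psi<1\}$ and the coarea identity applies on $\Omega$ itself.
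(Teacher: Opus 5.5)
Your proposal is correct and follows the paper's proof essentially step for step. The pointwise identity $\varphi^2-|\nabla\psi|^2/\psi^2 = 2w|\nabla\psi|/\psi + w^2$ is the paper's $a^2-b^2\ge 2(a-b)b$, and the dropped $w^2$ term gives strictness; the coarea representation $F(t)=\int_t^1 \tau^{-1}\int_{S(\tau)} w\,e^h\,d\sigma\,d\tau$ and the identity $\int_{S(t)} w\,e^h\,d\sigma - 2F(t) = -t^{-1}\frac{d}{dt}\big(t^2F(t)\big)$ then finish as in the paper. Your extra check that $w|\nabla\psi|/\psi\in L^1(U(t_0),\gamma)$ for each $t_0>0$ (so the inner integrand is in $L^1_{\mathrm{loc}}(0,1)$) makes explicit the local absolute continuity of $F$ that the paper only asserts.
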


\begin{proof}
Using the elementary algebraic inequality $a^2 - b^2 \ge 2(a-b)b$, we have
\begin{align*}
\varphi^2 - \frac{|\nabla\psi|^{2}}{\psi^{2}} \ge 2 \left(\varphi - \frac{|\nabla\psi|}{\psi}\right)\frac{|\nabla\psi|}{\psi} = 2w \frac{|\nabla\psi|}{\psi}.
\end{align*}
Combining this with  \eqref{eq:lambda_rep_expanded}, we obtain
\begin{align}\label{eq:H strict}
    H_{\Omega}(U(t),\varphi) - \lambda_{1}(\Omega,\beta)
    &= \frac{1}{|U(t)|_\gamma}\left(\int_{S(t)}w \, e^{h}\,d\sigma - \int_{U(t)}\left(\varphi^{2}-\frac{|\nabla\psi|^{2}}{\psi^{2}}\right)e^{h}\,d\mu\right) \\
    &\leq \frac{1}{|U(t)|_\gamma}\left(\int_{S(t)}w \, e^{h}\,d\sigma - 2\int_{U(t)}w\frac{|\nabla \psi|}{\psi} e^{h}\,d\mu\right).\nonumber 
\end{align}
If $\varphi \neq \frac{|\nabla\psi|}{\psi}$ on a subset of $U(t)$ with positive weighted measure, then the above inequality is strict.

Next, since $\|\psi\|_{L^\infty} = 1$, the  co-area formula gives
\begin{align*}
    F(t) &= \int_{U(t)}w\frac{|\nabla \psi|}{\psi}e^{h}\,d\mu 
    = \int_{t}^{1}\left(\int_{S(\tau)}\frac{w}{\psi}e^{h}\,d\sigma\right)d\tau 
    = \int_{t}^{1}\frac{1}{\tau}\left(\int_{S(\tau)}w \, e^{h}\,d\sigma\right) d\tau.
\end{align*}
This representation shows that $F(t)$ is locally absolutely continuous on $(0,1)$. Differentiating $t^2 F(t)$ with respect to $t$, we get
\begin{align*}
    \frac{d}{dt}\left(t^{2}F(t)\right) &= 2tF(t) + t^{2}F'(t) \\
    &= 2t\int_{U(t)}w\frac{|\nabla\psi|}{\psi}e^{h}\,d\mu - t^{2}\left(\frac{1}{t}\int_{S(t)}w \, e^{h}\,d\sigma\right) \\
    &= -t\left(\int_{S(t)}w \, e^{h}\,d\sigma - 2\int_{U(t)}w\frac{|\nabla \psi|}{\psi}e^{h}\,d\mu\right).
\end{align*}
Substituting this  into \eqref{eq:H strict} yields
\begin{align*}
    H_{\Omega}(U(t),\varphi) - \lambda_{1}(\Omega,\beta) \leq -\frac{1}{t|U(t)|_\gamma}\frac{d}{dt}\big(t^{2}F(t)\big),
\end{align*}
which proves \eqref{key inequality}.
\end{proof}

\begin{prop}\label{lambda low bound}
Let $\varphi:\Omega\to [0,\infty)$ be a measurable function in $L^2(\Omega,\gamma)$. Assume that the set $\left\{x\in\Omega: \varphi(x)\neq \frac{|\nabla \psi(x)|}{\psi(x)}\right\}$ has positive weighted measure. Then there exists a subset $S\subset(0,1)$ of positive Lebesgue measure such that
\begin{align}\label{3.9}
    \lambda_{1}(\Omega,\beta) > H_{\Omega}(U(t),\varphi)
\end{align}
for all $t\in S$.
\end{prop}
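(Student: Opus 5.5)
Following Daners' argument, we argue by contradiction using Proposition \ref{Key prop}. Suppose the conclusion fails. Then $\lambda_1(\Omega,\beta)\le H_\Omega(U(t),\varphi)$ for almost every $t\in(0,1)$. Since $\{\varphi\neq|\nabla\psi|/\psi\}$ has positive weighted measure and $\Omega=\bigcup_{t>0}U(t)$ up to a null set ($\psi>0$ in $\Omega$), there is $t_0\in(0,1)$ such that this set meets $U(t)$ in positive weighted measure for all $t\in(0,t_0)$. By the strict case of Proposition \ref{Key prop}, for a.e. $t\in(0,t_0)$,
\[
\lambda_1(\Omega,\beta)\le H_\Omega(U(t),\varphi)<\lambda_1(\Omega,\beta)-\frac{1}{t|U(t)|_\gamma}\frac{d}{dt}\big(t^2F(t)\big).
\]
Hence $\frac{d}{dt}(t^2F(t))<0$ a.e. on $(0,t_0)$. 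Because $t^2F(t)$ is locally absolutely continuous, it is strictly decreasing on $(0,t_0)$.

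The next step is to get a contradiction from the behaviour as $t\to0^+$. We first check that $F(t)$ is bounded on $(0,1)$:
\[
|F(t)|\le\int_\Omega\Big(\varphi\frac{|\nabla\psi|}{\psi}+\frac{|\nabla\psi|^2}{\psi^2}\Big)e^h\,d\mu .
\]
This requires $|\nabla\psi|/\psi\in L^2(\Omega,\gamma)$, together with $\varphi\in L^2(\Omega,\gamma)$ and Cauchy–Schwarz. The integrability of $|\nabla\psi|^2/\psi^2$ comes from the representation formula of Proposition \ref{lambda representation 3.1}:
\[
\int_{U(t)}\frac{|\nabla\psi|^2}{\psi^2}e^h\,d\mu\le\int_{S(t)}\frac{|\nabla\psi|}{\psi}e^h\,d\sigma+\beta P_\gamma(\Omega).
\]
The right-hand side has to be controlled along a sequence $t_k\to0$. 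By the coarea formula,
\[
\int_0^1\int_{S(\tau)}|\nabla\psi|\,e^h\,d\sigma\,d\tau=\int_\Omega|\nabla\psi|^2e^h\,d\mu<\infty ,
\]
so $\liminf_{\tau\to 0}\tau\int_{S(\tau)}\frac{|\nabla\psi|}{\psi}e^h\,d\sigma$ is finite. However, the boundary term is divided by $\tau$, so this alone is not enough. We therefore expect to use $\inf_\Omega\psi>0$ instead. This holds on a bounded Lipschitz domain with $\beta>0$: $\psi\in C(\overline\Omega)$ by Lemma \ref{lem:regularity}, and $\psi>0$ on $\overline\Omega$ by a Hopf-type argument, since $\psi=0$ at a boundary point would contradict the Robin condition. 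Then $U(t)=\Omega$ and $S(t)=\emptyset$ for $t<m:=\min\psi$.

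With this, the argument closes cleanly. For $t<\min(m,t_0)$ we have $U(t)=\Omega$, so $F(t)=F_0$ is constant there. Hence $\frac{d}{dt}(t^2F)=2tF_0<0$ forces $F_0<0$. Moreover, on this range $S(t)=\emptyset$ and $\Gamma_t=\partial\Omega$. Plugging directly into \eqref{eq:H strict} gives
\[
H_\Omega(\Omega,\varphi)-\lambda_1<\frac{-2F_0}{|\Omega|_\gamma}.
\]
This is consistent, so the contradiction does not come from $t\to 0$ in this case. Instead we use the whole interval: $t^2F(t)$ is strictly decreasing on $(0,t_0)$ and vanishes at $0$, so $t^2F(t)<0$ there. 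We now push $t_0$ up. Let $t_0$ be maximal with the positive-measure property; then $\varphi=|\nabla\psi|/\psi$ a.e. on $U(t_0)$, so $F(t_0)=0$. On $[t_0,1)$ we have $w=0$ a.e. in $U(t)$, so $F\equiv0$ there. Continuity of $t^2F$ at $t_0$ then gives $t_0^2F(t_0)=0$. This contradicts $t^2F(t)<0$ on $(0,t_0)$ together with strict decrease: a strictly decreasing function starting at $0$ cannot return to $0$. The case $t_0=1$ is handled by taking the limit $t\to1$, using $F(t)\to0$ as $|U(t)|_\gamma\to0$.

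The main obstacle is justifying the boundary behaviour: the limits $t^2F(t)\to0$ as $t\to0$ and $F(t)\to0$ as $t\to1$, which rest on $|\nabla\psi|/\psi\in L^2$ and $\min_{\overline\Omega}\psi>0$.
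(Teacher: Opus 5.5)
Your skeleton is sound, but it differs from the paper's in where the two endpoint values of $G=t^2F$ come from.

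\textbf{The paper's route.} The paper uses the non-strict case of Proposition~\ref{Key prop} on all of $(0,1)$. This makes $G$ nonincreasing with $G(1)=0$, so $G\ge 0$. Strict decrease on $(0,t_0)$ then forces $\lim_{t\to0^+}G(t)>0$, which contradicts $G(t)\le Ct$.

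\textbf{Your route.} You take $t_0$ maximal and observe directly that $w=0$ a.e.\ on $U(t_0)$, so $G(t_0)=0$. This is a clean alternative that avoids the monotonicity argument on $[t_0,1)$. The contradiction then comes from strict decrease between $G(0^+)=0$ and $G(t_0)=0$.

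\textbf{The gap: how you obtain $G(0^+)=0$.} You rely on $\min_{\overline\Omega}\psi>0$ and justify it by a Hopf-type argument. That argument does not work on a Lipschitz domain:
\begin{itemize}
\item there is no interior ball condition at boundary corners;
\item $\psi$ need not be $C^1$ up to $\partial\Omega$;
\item the Robin condition holds only in the weak/trace sense, so the boundary-point contradiction you describe is not available.
\end{itemize}
The positivity fact itself is true, but it is a nontrivial theorem (inverse positivity for Robin problems on Lipschitz domains, due to Daners) that you would have to cite.

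\textbf{You do not need it.} Your argument only uses $\limsup_{t\to0^+}t^2F(t)\le 0$. Strict decrease on $(0,t_0)$ then already gives $t^2F(t)<0$ there, contradicting $t_0^2F(t_0)=0$. A one-sided bound is enough. Since $w\,|\nabla\psi|/\psi\le\varphi\,|\nabla\psi|/\psi$ and $\psi>t$ on $U(t)$, Cauchy--Schwarz gives
\[
F(t)\le t^{-1}\|\varphi\|_{L^2(\Omega,\gamma)}\,\||\nabla\psi|\|_{L^2(\Omega,\gamma)},
\]
hence $t^2F(t)\le Ct\to0$. This is exactly the paper's estimate. Trying to bound $|F|$ from both sides is what pushed you toward needing $|\nabla\psi|/\psi\in L^2$ and positivity up to the boundary.

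With this substitution your proof is complete, and the exploratory paragraph ending ``this is consistent'' can be dropped.
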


\begin{proof}
Suppose,  to the contrary, that $\lambda_{1}(\Omega,\beta) \leq H_{\Omega}(U(t),\varphi)$ holds for almost every $t\in(0,1)$. Then  Proposition \ref{Key prop} implies
\begin{align*}
-\frac{1}{t|U(t)|_\gamma}\frac{d}{dt}\big(t^{2}F(t)\big) \geq 0 \quad \text{for a.e. } t\in(0,1).
\end{align*}
Define $G(t) := t^2 F(t)$. Since $F$ is locally absolutely continuous, so is $G$. The above inequality implies that $G'(t) \le 0$ a.e., hence $G$ is non-increasing on $(0,1)$. Since $U_1 = \emptyset$, we have $F(1)=0$ and thus $G(1)=0$. The monotonicity of $G$ then implies $G(t) \geq 0$ for all $t\in(0,1)$.

By  assumption, $\varphi \not\equiv \frac{|\nabla\psi|}{\psi}$ on a subset of $\Omega$ with positive weighted measure. Since $\bigcup_{t>0}U(t)=\Omega$, there exists $t_0\in(0,1)$ such that
\begin{align*}
\left|\left\{x\in U(t): \varphi(x)\neq \frac{|\nabla\psi(x)|}{\psi(x)}\right\}\right|_{\gamma} > 0
\end{align*}
for all $t\in(0,t_0)$. Consequently, the inequality in Proposition \ref{Key prop} is strict for $t\in(0,t_0)$, which yields $G'(t) < 0$ a.e. on $(0, t_0)$. Hence 
\begin{align*}
\lim_{t\to 0^+}G(t) \ge G(t_0) > G(1) = 0.
\end{align*}

On the other hand, from the definitions of $F(t)$ and $w$, and using  $\psi(x) > t$ for $x \in U(t)$, we have
\begin{align*}
F(t)
&= \int_{U(t)}\left(\varphi - \frac{|\nabla \psi|}{\psi}\right)\frac{|\nabla \psi|}{\psi}e^{h}\,d\mu \\
&\leq \int_{U(t)}\varphi \frac{|\nabla \psi|}{\psi}e^{h}\,d\mu \\
&\leq \frac{1}{t}\int_{U(t)}\varphi |\nabla \psi|e^{h}\,d\mu.
\end{align*}
By the  Cauchy-Schwarz inequality, since  $\varphi \in L^2(\Omega,\gamma)$ and $\psi \in H^1(\Omega,\gamma)$, there exists a constant $C$, independent of $t$, such that 
\begin{align*}
F(t)
&\leq \frac{1}{t}\left(\int_{U(t)}\varphi^2 e^{h}\,d\mu\right)^{\frac12}\left(\int_{U(t)}|\nabla\psi|^2 e^{h}\,d\mu\right)^{\frac12}
\leq \frac{C}{t}.
\end{align*}
Therefore
\begin{align*}
0 < \lim_{t\to 0^+}G(t) = \lim_{t\to 0^+}t^2 F(t) \leq \lim_{t\to 0^+}t^2 \left(\frac{C}{t}\right) = \lim_{t\to 0^+} Ct = 0,
\end{align*}
which is a contradiction. This completes the proof.
\end{proof}

\section{Eigenvalues on geodesic balls centered at the origin}\label{sect4}
Let $B(R)$ denote  the geodesic ball of radius $R$ centered at the origin $o$  in either $\R^n$ or $\mathbb{H}^n$. By the radial symmetry of both the ball  and  the weight function $e^{h(r)}$, the first eigenfunction of \eqref{differential equation} for $B(R)$ is radially symmetric. Consequently, the first eigenvalue equations reduces to the following one-dimensional boundary value problem:
\begin{align}\label{4.1}
    \begin{cases}
   u''(r)+\left((n-1)\frac{S'(r)}{S(r)}+h'(r)\right)u'(r)+\lambda_1(B(R), \beta) u(r)=0, \quad r\in (0, R),\\
   u'(0)=0, \quad u'(R)=-\beta u(R),
\end{cases}
\end{align}
where $S(r)=r$  in the Euclidean case $\R^n$ and $S(r)=\sinh r$ in the hyperbolic case $\mathbb{H}^n$. For the eigenfunction $u(r)$, we have the following monotonicity lemma.
\begin{lemma}\label{lem:radial_decreasing}
Suppose $\beta>0$. Then $u'(r)<0$ for all $r\in (0, R)$; hence $u(r)$ is strictly decreasing on $(0, R)$.
\end{lemma}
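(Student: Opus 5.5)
We prove the lemma by writing the radial ODE \eqref{4.1} in divergence form and integrating from the center outward. Set $\lambda:=\lambda_1(B(R),\beta)$ and $w(r):=S(r)^{n-1}e^{h(r)}$. Then the equation in \eqref{4.1} is equivalent to
\begin{align*}
\bigl(w(r)\,u'(r)\bigr)'=-\lambda\, w(r)\,u(r),\qquad r\in(0,R).
\end{align*}
Since $u$ is the first eigenfunction, Lemma \ref{existence and positive} gives $u>0$ on $[0,R)$ and $\lambda>0$ (because $\beta>0$). By Lemma \ref{lem:regularity} and the smoothness of the radial ODE away from $r=0$, the function $u$ is $C^1$ on $[0,R]$ with $u'(0)=0$. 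Note that $w(r)\to0$ as $r\to0^+$ for $n\ge2$, because $S(r)\sim r$. For $n=1$ we have $w=e^{h}$, which is bounded, and $u'(0)=0$ handles the boundary term.

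Integrate the identity from $\varepsilon$ to $r$ and let $\varepsilon\to0^+$. The boundary term $w(\varepsilon)u'(\varepsilon)$ tends to $0$: it is the product of a bounded factor with one tending to zero, in either case. This yields
\begin{align*}
S(r)^{n-1}e^{h(r)}\,u'(r)=-\lambda\int_0^r S(s)^{n-1}e^{h(s)}\,u(s)\,ds .
\end{align*}
For $r\in(0,R)$ the integrand is strictly positive on $(0,r)$, since $u>0$, $S>0$ and $e^h>0$. With $\lambda>0$, the right-hand side is therefore strictly negative. Dividing by $S(r)^{n-1}e^{h(r)}>0$ gives $u'(r)<0$, and hence $u$ is strictly decreasing on $(0,R)$.

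The only point needing care is justifying positivity of $u$ up to $r=0$ and the vanishing of the boundary term at the origin. The ODE has a regular singular point at $r=0$. The radial eigenfunction is the restriction of a $C^1$ function on the ball (Lemma \ref{lem:regularity}), so $u$ is bounded and $u'(0)=0$, which controls that term.

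Positivity of $u$ on $[0,R)$ comes from the positive first eigenfunction in Lemma \ref{existence and positive}, via the strong maximum principle. We also need that this positive eigenfunction is radial. This holds because the first eigenvalue is simple: any rotation of it is again a positive first eigenfunction with the same normalization, so it must equal the original.

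The positivity $\lambda>0$ is exactly where the hypothesis $\beta>0$ enters. As a consistency check, the boundary condition $u'(R)=-\beta u(R)<0$ agrees with this sign.
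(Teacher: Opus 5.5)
Your proof is correct and follows essentially the same route as the paper: you write the ODE in divergence form $\bigl(S^{n-1}e^{h}u'\bigr)'=-\lambda S^{n-1}e^{h}u<0$, use that $S^{n-1}e^{h}u'\to 0$ as $r\to0^+$, and conclude $S^{n-1}e^{h}u'<0$ on $(0,R)$, with $\lambda>0$ coming from $\beta>0$. Your extra remarks, on radial symmetry via simplicity of the first eigenvalue and on controlling the boundary term at the origin, fill in details the paper leaves implicit.
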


\begin{proof}
From   \eqref{4.1}, it follows that
\begin{align*}
   \frac{d}{dr}\left(S^{n-1}(r) e^{h}\frac{du}{dr}\right) = -\lambda_1(B(R), \beta) S^{n-1}(r) e^h u < 0,
\end{align*}
where the inequality holds because 
$\lambda_1(B(R), \beta)>0$ by Lemma \ref{existence and positive} and $u(r)>0$. Therefore, for all $r\in (0, R)$,
 \begin{align*}
   S(r)^{n-1} e^{h(r)}\frac{du}{dr}<\lim_{r\to 0^+} S(r)^{n-1} e^h u'(r)=0, 
 \end{align*}
 where we used $S(0)=0$ and the boundary condition $u'(0)=0$. Therefore we conclude
 $$
 u'(r)<0
 $$
 for all $r\in (0, R)$.
\end{proof}
Define $v(r) = -\frac{u'(r)}{u(r)}$. Then 
\begin{equation}\label{eq:v-origin}
	v(0)=0,\qquad v'(0)=\frac{\lambda}{n}>0,\qquad v(R)=\beta.
\end{equation}

\subsection{The Euclidean Case}
Restricting to the ball $B(R)\subset\R^n$, equation \eqref{4.1} becomes
\begin{align}\label{1 dimen eigen R}
    \begin{cases}
   u''(r)+\left(\frac{n-1}{r}+h'(r)\right)u'(r)+\lambda_1(B(R), \beta) u(r)=0, \quad r\in (0, R),\\
   u'(0)=0, \quad u'(R)=-\beta u(R).
\end{cases}
\end{align}
 By Lemma \ref{lem:radial_decreasing}, $v(r) > 0$ for all $r\in (0,R)$. 
A direct computation gives the first-order differential equation for $v$:
\begin{align}\label{v first derivative}
    v'(r) = -\left(\frac{n-1}{r}+h'(r)\right)v(r) + v(r)^{2} + \lambda_1(B(R), \beta),
\end{align}
and differentiating once more gives the second-order equation:
\begin{align}\label{v second derivative}
    v''(r) = -\left(\frac{n-1}{r}+h'(r)\right)v'(r) - \left(h''(r)-\frac{n-1}{r^{2}}\right)v(r) + 2v(r)v'(r).
\end{align}

\begin{prop}\label{prop:Euclidean_v_max}
Let $\beta > 0$, and suppose that $h(r)$ satisfies one of the following two conditions: 
\begin{enumerate}
    \item[\textup{(i)}] $\beta > \frac{C_{1}(R)R}{2}$, where $C_{1}(R)=\sup_{r\in (0,R]} \left(h''(r) + \frac{h'(r)}{r}\right)$;
    \item[\textup{(ii)}] $0\leq h''(r)\leq \dfrac{n-1}{r^2}$ for all $r\in(0,R)$.
\end{enumerate}
Then
\begin{equation}\label{eq:v-Euclidean-bound}
	0\leq v(r)<v(R)=\beta
	\qquad(0\leq r<R).
\end{equation}
\end{prop}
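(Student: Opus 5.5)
Write $\lambda=\lambda_1(B(R),\beta)$ and recall $v=-u'/u$. By Lemma \ref{lem:radial_decreasing} and \eqref{eq:v-origin}, $v$ is smooth on $[0,R]$, $v(0)=0$, $v>0$ on $(0,R]$ and $v(R)=\beta$. The lower bound $v\ge 0$ is therefore immediate, and it remains to prove $v(r)<\beta$ for $r\in[0,R)$. In both cases the idea is to study an interior maximum of $v$.

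\textbf{Case (ii).} Here I use \eqref{v second derivative} and a maximum-principle argument. Suppose $\max_{[0,R]}v$ is attained at some $r_0\in(0,R)$; note $r_0\neq 0$, since $v(0)=0<v$ nearby. At $r_0$ we have $v'(r_0)=0$ and $v''(r_0)\le 0$. Substituting into \eqref{v second derivative} gives
\[
v''(r_0)=\Big(\frac{n-1}{r_0^2}-h''(r_0)\Big)v(r_0)\ge 0 .
\]
If this is strictly positive we have a contradiction. In the degenerate case $h''(r_0)=(n-1)/r_0^2$, I would instead look at the set where $v'=0$. From \eqref{v first derivative}, $v'$ satisfies the linear equation
\[
(v')'=\Big(2v-\frac{n-1}{r}-h'\Big)v'+\Big(\frac{n-1}{r^2}-h''\Big)v .
\]
Since the forcing term $\big(\frac{n-1}{r^2}-h''\big)v$ is nonnegative, $v'$ cannot change from positive to negative: once $v'\ge 0$, comparison for this linear ODE keeps it nonnegative. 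Because $v'(0)>0$, this gives $v'\ge 0$ on $[0,R]$, so $v$ is nondecreasing. Strictness $v<v(R)$ then follows, since $v'\equiv0$ on an interval $[r_1,R]$ would force $\big(\frac{n-1}{r^2}-h''\big)v\equiv0$ there. Away from that degenerate situation this is impossible because $v>0$; if it does occur, I would argue through analyticity of the ODE or a perturbation of the comparison. So in case (ii) the conclusion is that $v$ is nondecreasing, with strict increase at the end.

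\textbf{Case (i).} Here I would use a comparison function of the form $\phi(r)=\frac{C_1 r}{2}$, or more precisely $v\le\frac{\lambda r}{n}+\dots$, together with the Riccati equation \eqref{v first derivative}. At an interior maximum $r_0$ of $v$, the condition $v'(r_0)=0$ gives
\[
v(r_0)^2-\Big(\frac{n-1}{r_0}+h'(r_0)\Big)v(r_0)+\lambda=0 .
\]
I would combine this with the condition $v''(r_0)\le 0$, which yields $h''(r_0)\ge\frac{n-1}{r_0^2}$. The plan is to show that if $v$ exceeded $\beta$ somewhere, the maximum value would be at most $\frac{C_1 r_0}{2}\le\frac{C_1R}{2}<\beta$.

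A cleaner route is the following. Set $z=v-\frac{C_1}{2}r$. Then
\[
z'=v^2-\Big(\frac{n-1}{r}+h'\Big)v+\lambda-\frac{C_1}{2} .
\]
Suppose $\max v=M\ge\beta$ is attained at an interior $r_0$ with $v'(r_0)=0$. Integrating $(r^{n-1}e^h v)'=r^{n-1}e^h(v^2+\lambda)$, which follows from \eqref{v first derivative}, expresses $v$ as an average. The idea is to bound $\lambda$ via $\lambda\le\frac{C_1}{2}\cdot{}$(something) and so force $M\le\frac{C_1 R}{2}$.

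Concretely, at the maximum point, $v''(r_0)\le0$ gives $\big(\frac{n-1}{r_0^2}-h''\big)\le0$. Using the definition of $C_1$,
\[
\frac{n-1}{r_0}+h'(r_0)\le C_1 r_0+\frac{n-1}{r_0}-h''(r_0)r_0\le C_1 r_0 .
\]
Hence the quadratic relation gives
\[
M^2+\lambda=\Big(\frac{n-1}{r_0}+h'(r_0)\Big)M\le C_1 r_0 M ,
\]
so $M<C_1 r_0$. This is off by a factor $2$ from what is needed, and sharpening it to $M\le\frac{C_1r_0}{2}$ is the main obstacle. To close the gap I would use $\lambda>0$ together with a lower bound $\lambda\ge M^2$-type. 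Alternatively, I would compare $v$ with the solution $w=\frac{C_1 r}{2}$, which satisfies
\[
w'\ge -\Big(\frac{n-1}{r}+h'\Big)w+w^2+\lambda
\]
as long as $\lambda$ is controlled. A Riccati comparison starting from $v(0)=w(0)=0$ would then give $v\le w$.

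Once $v<\beta$ on $[0,R)$ is established for an interior maximum, boundary monotonicity $v'(R)\ge0$ handles the endpoint. The argument is then complete: every interior value is strictly below $v(R)=\beta$.
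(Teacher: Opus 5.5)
Your case (ii) is the paper's argument: the linear equation for $w=v'$ has nonnegative forcing $\big(\frac{n-1}{r^2}-h''\big)v$, and $w>0$ near $0$. Strictness needs no separate treatment, and your analyticity fix is not available anyway, since $h$ is only smooth and (ii) allows $h''=(n-1)/r^2$ on an interval. The integrating-factor formula $\mu(r)w(r)=w(\varepsilon)+\int_\varepsilon^r\mu\big(\frac{n-1}{s^2}-h''\big)v\,ds\ge w(\varepsilon)>0$ already gives $v'>0$ on $(0,R)$.

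Case (i) has a genuine gap.
\begin{itemize}
\item At an interior maximum of $v$ itself, the nonlinear term $2vv'$ in \eqref{v second derivative} vanishes. What remains, the quadratic relation and $h''(r_0)\ge (n-1)/r_0^2$, only gives $M<C_1r_0$, i.e.\ the result under $\beta\ge C_1R$. You identify the lost factor $2$ but never recover it.
\item Your Riccati comparison $v\le \frac{C_1}{2}r$ cannot hold: at $r=R$ it would give $\beta\le C_1R/2$, contradicting (i).
\item You do write down $z=v-\frac{C_1}{2}r$, but you use it only in the first-order equation while still maximizing $v$.
\end{itemize}

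The missing step is to apply the maximum principle to $\xi=v-\frac{C_1}{2}r$ in the second-order equation. Substituting into \eqref{v second derivative} gives
\[
0=\xi''+\Big(\frac{n-1}{r}+h'-2v\Big)\xi'-\Big(C_1-h''+\frac{n-1}{r^2}\Big)\xi+\frac{C_1}{2}\,r\Big(\frac{h'}{r}+h''-C_1\Big).
\]
The coefficient of $\xi$ is negative, since $C_1-h''\ge h'/r\ge 0$. The last term is $\le 0$ by the definition of $C_1$. Because $\xi(0)=0<\xi(R)=\beta-\frac{C_1R}{2}$, an interior maximum $r_0$ would have $\xi(r_0)>0$ and $\xi'(r_0)=0$. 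The equation would then force $\xi''(r_0)>0$, a contradiction. Hence $\xi(r)<\xi(R)$ for $r<R$, that is, $v(r)<\beta-\frac{C_1}{2}(R-r)<\beta$.

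This works where your argument fails because at a maximum of $\xi$ one has $v'(r_0)=C_1/2\neq 0$. So the term $2vv'$ contributes $C_1v(r_0)$, and that is exactly what produces the threshold $C_1R/2$ instead of $C_1R$.
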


\begin{proof}
Since \(h\) is even and convex,
\[
h'(0)=0,\qquad h'(r)\geq0,\qquad h''(r)\geq0
\quad(r\geq0).
\]
Taylor's theorem gives
\(\displaystyle\lim_{r\to 0}h'(r)/r=h''(0)\).

Assume first condition~\textup{(ii)} and put
\[
B(r):=\frac{n-1}{r^2}-h''(r)\geq0.
\]
For \(w=v'\), equation
\eqref{v second derivative} becomes
\[
w'+\left(\frac{n-1}{r}+h'-2v\right)w=Bv\geq0.
\]
By \eqref{eq:v-origin}, \(w(r)>0\) for all sufficiently small
\(r>0\).  Fix such an \(\varepsilon\), and define
\[
\mu(r):=
\exp\left(
\int_\varepsilon^r
\left(\frac{n-1}{s}+h'(s)-2v(s)\right)\dd s
\right)>0.
\]
Then
\[
(\mu w)'=\mu Bv\geq0
\]
and, for \(r\geq\varepsilon\),
\[
\mu(r)w(r)
=
w(\varepsilon)+\int_\varepsilon^r\mu(s)B(s)v(s)\dd s>0.
\]
Letting \(\varepsilon\) vary shows that \(v'(r)>0\) on \((0,R)\),
and \eqref{eq:v-Euclidean-bound} follows.

We now turn to case (i):
 $\beta >\frac 1 2 C_1(R) R$, where
 $$C_1(R):= \sup_{r\in (0,R]} \left(h''(r) + \frac{h'(r)}{r}\right), $$
 which is a nonnegative constant, due to  that $h(r(x))$ is smooth,  $h'(0)=0$, and $h''(0)\ge 0$.
 
Define the auxiliary function $$\xi(r) = : v(r) - \frac 1 2 C_1(R) r.$$   Substituting   $v(r) = \xi(r) + \frac 1 2 C_1(R) r$ into \eqref{v second derivative} and rearranging terms, we obtain
\begin{align}\label{4.5}   
0 = \xi'' + \left(\frac{n-1}{r}+h'-2v\right)\xi' - \left(C_1(R)-h''+\frac{n-1}{r^{2}}\right)\xi + \frac{C_1(R)}{2}\left(h'+h''r-C_1(R)r\right).
\end{align}

By the definition of $C_1(R)$, we have for all $r\in (0, R)$
\begin{align}\label{4.6}
h'(r)+h''(r)r-C_1(R)r = r\left(\frac{h'(r)}{r}+h''(r)-C_1(R)\right) \leq 0,
\end{align}
and 
\begin{align}\label{4.7}
 C_1(R) - h''(r) + \frac{n-1}{r^2} \ge \frac{h'(r)}{r} + \frac{n-1}{r^2} > 0.
\end{align}
By the assumption $\beta > kR$,  boundary condition $v(R)=\beta$, and $v(0)=0$, we have
\begin{align*}
   \xi(R) = \beta - \frac {C_1(R)}2R > 0, \quad \text{and}\quad \xi(0) = v(0) - 0 = 0. 
\end{align*}
Suppose $\xi(r)$ attains its global maximum on $[0,R]$ at an interior point $r_{0}\in(0,R)$. Then $\xi'(r_{0})=0$, $\xi''(r_{0})\leq 0$, and $\xi(r_0) > 0$. Then from \eqref{4.5}, it follows 
\begin{align*}
\xi''(r_{0}) = \left(C_{1}(R)-h''(r_0)+\frac{n-1}{r_0^{2}}\right)\xi(r_0) - \frac{C_{1}(R)}{2}r_0\left(\frac{h'(r_0)}{r_0}+h''(r_0)-C_{1}(R)\right).
\end{align*}
Using inequalities \eqref{4.6} and \eqref{4.7}, we have
$$
\xi''(r_0)>0,
$$
contradicting with $\xi''(r_0)\le 0$. Therefore, the maximum of $\xi(r)$
 the maximum of $\xi$ must occur at the boundary $r=R$. Hence $\xi(r) < \xi(R)$ for all $r\in(0,R)$, which implies
\begin{align*}
v(r) < v(R) - \frac{C_{1}(R)}{2}(R-r) < v(R).
\end{align*}
This completes the proof.
\end{proof}

\begin{remark}
	The lower bound on \(\beta\) in
	Proposition~\ref{prop:Euclidean_v_max}\textup{(i)} cannot in general be
	discarded. 
For the Gaussian weight $h(r) = \frac{1}{2}r^2$, the function $\varphi(r)=e^{-\frac{1}{2}r^{2}}$ solves  \eqref{1 dimen eigen R} with eigenvalue $\lambda=n$ and boundary parameter $\beta=R$. By the variational characterization of the principal eigenvalue, for any $\beta < R$,
\begin{align*}
n =& \frac{\int_{B(R)}|\nabla \varphi|^{2}e^{\frac{1}{2}r^{2}} + R\int_{\partial B(R)}\varphi^{2}e^{\frac{1}{2}r^{2}}}{\int_{B(R)}\varphi^{2}e^{\frac{1}{2}r^{2}}} \\
\geq& \frac{\int_{B(R)}|\nabla \varphi|^{2}e^{\frac{1}{2}r^{2}} + \beta\int_{\partial B(R)}\varphi^{2}e^{\frac{1}{2}r^{2}}}{\int_{B(R)}\varphi^{2}e^{\frac{1}{2}r^{2}}} \\
>& \lambda_1(B(R),\beta).
\end{align*}
From  \eqref{v first derivative}, we have
\begin{align*}
v'(r) = -\left(\frac{n-1}{r}+r\right)v(r)+v^2(r)+\lambda_1(B(R),\beta) \leq -rv(r)+v^2(r)+n.
\end{align*}
Evaluating at $r=R$ where $v(R)=\beta$, we find $v'(R) < 0$ provided that $\beta$ lies between the roots of $-R\beta + \beta^2 + n = 0$, i.e.,
\begin{align*}
\frac{R-\sqrt{R^{2}-4n}}{2} < \beta < \frac{R+\sqrt{R^{2}-4n}}{2}.
\end{align*}
This illustrates that if $\beta$ is too small, $v(r)$ may attain its maximum in the interior of $(0,R)$. This example shows that a
size condition on \(\beta\) is genuinely needed for the boundary
maximum argument, although it does not assert that the threshold in
Proposition~\ref{prop:Euclidean_v_max}\textup{(i)} is optimal.
\end{remark}

\subsection{The Hyperbolic Case}
On the geodesic ball $B(R)\subset \mathbb{H}^{n}$,  equation \eqref{4.1} gives the following one-dimensional eigenvalue problem:
\begin{align}\label{1 dimen eigen}
    \begin{cases}
   u''(r)+\big((n-1)\coth{r}+h'(r)\big)u'(r)+\lambda_1(B(R), \beta) u(r)=0, \quad r\in (0, R),\\
   u'(0)=0,\quad u'(R)=-\beta u(R).
\end{cases}
\end{align}
Define $v(r):=-\frac{u'(r)}{u(r)}$. Direct computation gives
\begin{align}
    v'(r) = -\left((n-1)\coth{r}+h'(r)\right)v(r) + \lambda_1(B(R), \beta) + v(r)^2,
\end{align}
and 
\begin{align}\label{v second diff equation}
    v''(r) = 2v(r)v'(r) - \big((n-1)\coth{r}+h'(r)\big)v'(r) - \left(h''(r)-\frac{n-1}{\sinh^{2}{r}}\right)v(r).
\end{align}

\begin{prop}\label{prop:Hyperbolic_v_max}
Let $\beta > 0$  and let \(h\) be smooth, even, and convex. Suppose that $h(r)$ satisfies one of the following two conditions: 
\begin{enumerate}
    \item[\textup{(i)}] $\beta > \left(\frac{C_{2}(R)}{2}-\min\left\{1,\frac{n-1}{2}\right\}\right)\tanh R$, where $C_{2}(R)=\sup_{r\in (0,R]}\left(\frac{h'(r)}{\tanh r}+\frac{h''(r)}{\tanh' r}\right)$;
    \item[\textup{(ii)}] $0\leq h''(r) \leq  \frac{n-1}{\sinh^{2}r}$ for all $r\in(0,R)$.
\end{enumerate}
Then
\begin{equation}\label{eq:v-Hyperbolic-bound}
	0\leq v(r)<v(R)=\beta
	\qquad(0\leq r<R).
\end{equation}
\end{prop}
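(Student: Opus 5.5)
We follow the scheme of Proposition~\ref{prop:Euclidean_v_max}, replacing the linear comparison function $r$ by $\tanh r$. First, $v\geq 0$ on $[0,R]$ by Lemma~\ref{lem:radial_decreasing}, with $v(0)=0$, $v'(0)=\lambda/n>0$ and $v(R)=\beta$ by \eqref{eq:v-origin}.

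\emph{Case (ii).} We repeat the integrating-factor argument verbatim. Set $B(r):=\frac{n-1}{\sinh^2 r}-h''(r)\geq 0$ and $w=v'$. Then \eqref{v second diff equation} reads
\[
w'+\big((n-1)\coth r+h'-2v\big)w=Bv\geq 0 .
\]
Since $w>0$ near $0$, the integrating factor $\mu(r)=\exp\int_\varepsilon^r((n-1)\coth s+h'(s)-2v(s))\dd s$ gives $(\mu w)'\geq 0$. Hence $v'>0$ on $(0,R)$, so $v(r)<v(R)=\beta$.

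\emph{Case (i).} Write $T(r)=\tanh r$, so $T'=1-T^2=\cosh^{-2}r$ and $T''=-2TT'$. Let $m:=\min\{1,\frac{n-1}{2}\}$, and take
\[
c:=\max\Big\{\tfrac{C_2(R)}{2}-m,\,0\Big\},\qquad \xi:=v-cT .
\]
Then $\xi(0)=0$, and $\xi(R)=\beta-c\tanh R>0$: if $c=0$ this is $\beta>0$, and otherwise it is the hypothesis. Substituting $v=\xi+cT$ into \eqref{v second diff equation}, we plan to obtain
\[
\xi''=\big(2v-(n-1)\coth r-h'\big)\xi'+\Big(2cT'-h''+\tfrac{n-1}{\sinh^2 r}\Big)\xi+R_0,
\]
\[
R_0=c\,TT'\Big(2c+2-\tfrac{h'}{T}-\tfrac{h''}{T'}\Big).
\]
The key simplification is the identity $\coth r\cdot T'=T/\sinh^2 r=\frac{1}{\sinh r\cosh r}$, which cancels the two $(n-1)$-terms in the remainder. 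Since $\frac{h'}{T}+\frac{h''}{T'}\leq C_2(R)$ and $2c+2\geq C_2-2m+2\geq C_2$ (because $m\leq 1$), we get $R_0\geq 0$.

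For the zeroth-order coefficient we use $2cT'\geq (C_2-2m)T'\geq h''+h'\frac{T'}{T}-2mT'$. Together with $h'\geq 0$ and $2m\leq n-1$, this gives
\[
2cT'-h''+\tfrac{n-1}{\sinh^2 r}\;\geq\;(n-1)\Big(\tfrac{1}{\sinh^2 r}-\tfrac{1}{\cosh^2 r}\Big)>0 .
\]
Now suppose $\xi$ attains its maximum over $[0,R]$ at some interior $r_0$. Since $\xi(R)>0=\xi(0)$, we have $\xi(r_0)>0$ and $\xi'(r_0)=0$. The displayed equation then forces $\xi''(r_0)>0$, contradicting maximality. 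So the maximum of $\xi$ is attained only at $r=R$, and for $r<R$ we get $v(r)<\beta-c(\tanh R-\tanh r)\leq\beta$.

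The main obstacle is the algebra of the substitution. The crux is to check that the $(n-1)$-terms cancel in $R_0$ and that the choice $m=\min\{1,\frac{n-1}{2}\}$ is exactly what makes $R_0\geq 0$ (this needs $m\leq 1$) and keeps the $\xi$-coefficient positive (this needs $2m\leq n-1$). A second point to handle is taking $c\geq 0$ rather than $\frac{C_2}{2}-m$ itself, since a negative $c$ would flip the sign of $R_0$.
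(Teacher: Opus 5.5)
Your proposal is correct and follows the paper's proof essentially step for step. Case (ii) uses the same integrating-factor argument for $v'$. Case (i) uses the same substitution $\eta=v-k\tanh r$, the same cancellation of the $(n-1)$-terms leading to the remainder $-k\tanh r\tanh' r\,\bigl(2k+2-\frac{h'}{\tanh r}-\frac{h''}{\tanh' r}\bigr)$, and the same interior-maximum contradiction. The only deviation is minor: you take $c=\max\{k,0\}$ so that the case $k\le 0$ is absorbed into the maximum-principle argument, whereas the paper observes that $k\le 0$ forces $h''\le (n-1)/\cosh^2 r<(n-1)/\sinh^2 r$ and falls back on case (ii).
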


\begin{proof}
As in the Euclidean case, evenness and convexity imply
\(h'(0)=0\), \(h'\geq0\), and \(h''\geq0\).  

Under condition~\textup{(ii)}, set
\[
B(r):=\frac{n-1}{\sinh^2r}-h''(r)\geq0.
\]
For \(w=v'\), equation
\eqref{v second diff equation} reads
\[
w'=\left(2v-(n-1)\coth r-h'(r)\right)w+Bv.
\]
Given \(r>0\), choose \(a\in(0,r)\) so small that \(w(a)>0\), which
is possible by \eqref{eq:v-origin}.  Variation of constants yields
\[
w(r)
=
e^{\int_a^r(2v-A_0)\dd\tau}
\left[
w(a)+
\int_a^r
e^{-\int_a^s(2v-A_0)\dd\tau}B(s)v(s)\dd s
\right]>0.
\]
Thus \(v\) is strictly increasing, proving
\eqref{eq:v-Hyperbolic-bound}.

Now we turn to case (i): $\beta > \left(\frac{C_{2}(R)}{2}-\min\left\{1,\frac{n-1}{2}\right\}\right)\tanh R$. Where 
$$C_{2}(R)=\sup_{r\in (0,R]}\left(\frac{h'(r)}{\tanh r}+\frac{h''(r)}{\tanh' r}\right).$$
Let 
\begin{align*}
   k (R, n):=\frac{C_2(R)}{2}-\min\left\{1,\frac{n-1}{2}\right\}
\end{align*}
and 
$$\eta(r) := v(r) - k(R, n)\tanh r.$$ 
Substituting $v(r) = \eta(r) + k(R, n)\tanh r$ into \eqref{v second diff equation} we obtain
\begin{align*}
    0 &= \eta'' + \big((n-1)\coth r+h'(r)-2v\big)\eta' + \left(h''-\frac{n-1}{\sinh^{2}r}-2k(R, n)\tanh' r\right)\eta \\
    &\quad + k(R, n)\tanh'' r + k(R, n)\big((n-1)\coth r+h'(r)\big)\tanh' r\\
    &\quad + k(R, n)\left(h''-\frac{n-1}{\sinh^{2}r}\right)\tanh r - 2k(R, n)^{2}\tanh r\tanh' r.
\end{align*}
Direct calculation gives
\begin{align*}
    &k\tanh'' r + k\big((n-1)\coth r+h'(r)\big)\tanh' r + k\left(h''-\frac{n-1}{\sinh^{2}r}\right)\tanh r - 2k^{2}\tanh r\tanh' r\\
    =&-k\tanh r\tanh' r\left(2k-\frac{h'}{\tanh r}-\frac{h''}{\tanh' r}+2\right),
\end{align*}
then the differential equation for $\eta$ takes the form:
\begin{align}\label{eq:xi Hn}
    0 &= \eta'' + \big((n-1)\coth r+h'(r)-2v\big)\eta' + \left(h''-\frac{n-1}{\sinh^{2}r}-2k(R, n)\tanh' r\right)\eta \nonumber \\
    &\quad - k(R, n)\tanh r\tanh' r\left(2k(R, n)-\frac{h'}{\tanh r}-\frac{h''}{\tanh' r}+2\right).
\end{align}

% Given that $h\in C^{2}([0,\infty))$ and $h'(0)=0$, L'Hôpital's rule ensures the limits are well-defined at $r=0$. Thus, there exists a finite constant $C_{2}(R) > 0$ defined by
% \begin{equation*}
%     C_{2}(R) = \max_{r\in [0,R]}\left(\frac{h'(r)}{\tanh r}+\frac{h''(r)}{\tanh' r}\right).
% \end{equation*}
% Let us choose $k = \frac{C_{2}(R)}{2}-\min\left\{1,\frac{n-1}{2}\right\}$. 
Assuming $k > 0$ for the moment, a direct computation shows that the coefficient of $\eta$ in \eqref{eq:xi Hn} satisfies:
\begin{align}\label{4.12}
    h''-\frac{n-1}{\sinh^{2}r}-2k\tanh' r &= -\tanh' r\left(2k+(n-1)\coth^{2}r-\frac{h''}{\tanh' r}\right) \\
    &\le -\tanh' r\left(2k+(n-1)-\frac{h''}{\tanh' r}\right) \le 0,\nonumber
\end{align}
where we used  $\coth^2 r > 1$ for $r > 0$, and the definition of $C_2(R)$, which implies $$2k(R, n) + (n-1) \ge C_2(R) \ge \frac{h''(r)}{\tanh' r}.$$

From the boundary condition $v(R) = \beta$ and the assumption $\beta > k(R, n) \tanh R$, we  have
$$\eta(R) = \beta - k(R, n)\tanh R > 0, \quad \text{and}\quad \eta(0) = 0.$$ 
Thus, $\eta$ must attain a positive global maximum on $[0, R]$. If this maximum were attained at an interior point $r_{0}\in (0,R)$, then $\eta'(r_{0})=0$, $\eta''(r_{0})\leq 0$, and $\eta(r_0) > 0$. Thus we conclude from \eqref{eq:xi Hn} that
\begin{align*}
0\le& - \eta''(r_0)= \left(h''(r_0)-\frac{n-1}{\sinh^{2}r_0}-2k\tanh' r_0\right)\eta(r_0)\\ 
    &-k\tanh r_0\tanh' r_0\left(2k-\frac{h'(r_0)}{\tanh r_0}-\frac{h''(r_0)}{\tanh' r_0}+2\right)\\
     \le & -k\tanh r_0\tanh' r_0\left(2k-\frac{h'(r_0)}{\tanh r_0}-\frac{h''(r_0)}{\tanh' r_0}+2\right)\\
     < & 0,
\end{align*}
which is a contradiction. Where  we used  \eqref{4.12} in the second inequality, and the definition of $k(R, n)$ in the last inequality. 
Therefore, the maximum of $\eta$ must occur at $r=R$. Since $\eta(0)=0$ and $\eta(R)>0$, the maximum is attained at $r=R$, so $\eta(r) < \eta(R)$ for all $r \in [0, R)$. Because $\tanh r$ is strictly increasing on $(0, R)$, it follows that for any $r \in (0, R)$,
\begin{equation*}
    v(r) = \eta(r) + k\tanh r < \eta(R) + k\tanh R = v(R).
\end{equation*}

Finally, if $k(R,n)\le 0$, then $C_2(R)\le 2 \min \left\{1,\frac{n-1}{2}\right\}$, and hence
$$
\frac{h''(r)}{\tanh' r}\leq C_2(R) \le \min\{2, n-1\} \le n-1.
$$
Thus
\begin{equation*}
    h''(r) \leq \frac{n-1}{\cosh^{2}r} < \frac{n-1}{\sinh^{2}r},
\end{equation*}
so condition  (ii) holds. The previous argument applies, completing the proof. 
\end{proof}

\begin{corollary}\label{Monotonicity of v}
Under the first set of conditions in Theorem \ref{Euclidean log convex} and Theorem \ref{Hyperbolic space with log convex}, if the function 
$h''(r)-\frac{n-1}{S(r)^{2}}$ is strictly monotonically increasing on \((0,R)\), then \(v'(r)>0\) for all \(r\in (0,R)\).
\end{corollary}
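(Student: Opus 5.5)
The plan is to argue by contradiction with the boundary estimate $v(r)<v(R)=\beta$. Under the first set of conditions in Theorem \ref{Euclidean log convex} and Theorem \ref{Hyperbolic space with log convex}, this estimate is already given by Proposition \ref{prop:Euclidean_v_max}(i) and Proposition \ref{prop:Hyperbolic_v_max}(i). Write $w:=v'$ and
\[
A(r):=(n-1)\frac{S'(r)}{S(r)}+h'(r),\qquad B(r):=\frac{n-1}{S(r)^2}-h''(r).
\]
Equations \eqref{v second derivative} and \eqref{v second diff equation} then both read
\[
w'=(2v-A)\,w+B\,v .
\]
The hypothesis says exactly that $B$ is strictly decreasing on $(0,R)$. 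Recall also that $v>0$ on $(0,R)$ by Lemma \ref{lem:radial_decreasing}, and that $w(r)>0$ for small $r>0$ by \eqref{eq:v-origin}.

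Suppose $w$ vanishes somewhere in $(0,R)$, and let $r_1$ be its first zero, so that $w>0$ on $(0,r_1)$ and $w(r_1)=0$. The first step is to show $B(r_1)\le 0$. Since $w$ is positive to the left of $r_1$ and vanishes at $r_1$, we have $w'(r_1)\le 0$. The equation at $r_1$ gives $w'(r_1)=B(r_1)v(r_1)$, and $v(r_1)>0$, so $B(r_1)\le 0$. Strict monotonicity of $B$ then yields $B(s)<0$ for all $s\in(r_1,R)$.

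The second step is variation of constants starting from $r_1$, exactly as in the proofs of case (ii). For $r\in(r_1,R]$,
\[
w(r)=\int_{r_1}^{r}\exp\!\Big(\int_s^r (2v-A)\,d\tau\Big)\,B(s)\,v(s)\,ds,
\]
and the integrand is strictly negative, so $w<0$ on $(r_1,R]$. In the hyperbolic case $A$ is smooth away from $0$, so the integrating factor is well defined on $[r_1,R]$. Consequently $v$ is strictly decreasing on $[r_1,R]$, which gives $v(R)<v(r_1)$. This contradicts $v(r_1)<v(R)$ from the propositions cited above. Hence $w$ has no zero in $(0,R)$; being positive near $0$, it satisfies $v'>0$ throughout $(0,R)$.

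The main obstacle is the degenerate situation in which $w$ merely touches zero at $r_1$, that is $B(r_1)=0$ and $w'(r_1)=0$. A pointwise second-derivative argument would need $B'(r_1)<0$, which strict monotonicity does not guarantee. The integral representation from $r_1$ avoids this, since it only uses $B<0$ on the open interval $(r_1,R)$, and that holds in every case. So I would present that representation rather than a local sign analysis at $r_1$.
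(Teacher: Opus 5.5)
Your proof is correct, and it differs from the paper's in how the endpoint information enters.

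\textbf{Shared identity.} The paper uses the same identity as you, written as $f(r):=S^{n-1}(r)e^{h(r)}u^2(r)v'(r)$ with $f'=S^{n-1}e^{h}u^2Bv$. Your integrating factor $\exp\big(\int_s^r(2v-A)\,d\tau\big)$ is exactly the ratio of these weights, since $(\log S^{n-1}e^{h}u^2)'=A-2v$.

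\textbf{The paper's route.} The paper argues directly, without contradiction. Since $B$ changes sign at most once, from positive to negative, $f$ first increases and then decreases, so it suffices to know $f(0)=0$ and $f(R)>0$. The latter requires $v'(R)>0$. The paper extracts this from $\xi'(R)\ge 0$ (resp.\ $\eta'(R)\ge0$), that is, from inside the maximum-principle proofs of Propositions \ref{prop:Euclidean_v_max}(i) and \ref{prop:Hyperbolic_v_max}(i). That step needs some care: the correct relation is $\xi'(R)=v'(R)-\tfrac12 C_1(R)$, and the hyperbolic case is only asserted to be analogous.

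\textbf{Your route.} You argue by contradiction from the first zero $r_1$ of $v'$. You use only the stated conclusion $v(r_1)<v(R)$ of those propositions. This treats the Euclidean and hyperbolic cases uniformly and bypasses the boundary-derivative computation entirely. Your integral representation from $r_1$ also handles the degenerate touching case cleanly, as you noted.

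\textbf{What each buys.} Your argument needs less than stated. If $B$ is merely nonincreasing, then $B(r_1)\le 0$ still gives $B\le0$ on $[r_1,R]$, hence $v(R)\le v(r_1)$, which already contradicts $v(r_1)<v(R)$. So your proof covers the non-strict monotonicity that Theorem \ref{thm3} actually assumes. The paper's version, in exchange, is a direct argument with a clear qualitative picture: $f$ is unimodal, so positivity reduces to checking the two endpoints.
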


\begin{proof}
We present the details for the Euclidean case only; the hyperbolic case is analogous and will be omitted.

 Let $\xi(r)$ be the auxiliary function defined in the proof of Proposition \ref{prop:Euclidean_v_max}. From Proposition  \ref{prop:Euclidean_v_max} we have  
 $\xi'(R)\ge 0$.
A direct computation yields
 $$
 \xi'(R)=v'(R)-\frac 1 2 C_{1}R,
 $$
and hence $$v'(R)=\xi(R)+\frac 1 2 C_1 R>0.$$
Now define 
\begin{align*}
f(r):=r^{n-1}e^{h(r)}u^{2}(r)v'(r),\quad r\in [0,R].
\end{align*}
A direct differentiation, using equation \eqref{v first derivative}, gives 
\begin{align}\label{eq:v(r) fundamental equation}
f'(r)
&=r^{n-1}e^{h(r)}u^{2}(r)\left(v''+\left(\frac{n-1}{r}+h'(r)+\frac{2u'}{u}\right)v'\right)\nonumber\\
&=r^{n-1}e^{h(r)}u^{2}(r)\left(\frac{n-1}{r^2}-h''(r)\right)v(r).
\end{align}
By assumption \(\frac{n-1}{r^2}-h''(r)\) is monotonically decreasing on \((0,R)\). Hence it has at most one zero. Therefore, only two cases are possible:
\begin{enumerate}
\item \(\frac{n-1}{r^2}-h''(r)\geq 0\) for all \(r\in (0,R)\);
\item there exists some \(r_0\in(0,R)\) such that \(\frac{n-1}{{r^2}}-h''(r)>0\) for \(r\in(0,r_0)\) and \(\frac{n-1}{{r^2}}-h''(r)<0\) for \(r\in(r_0,R)\).
\end{enumerate}
In either case, equation \eqref{eq:v(r) fundamental equation} implies $f(r)$  attains  its minimum on the boundary of  $(0,R)$.  Since $f(0)=0$ and $$f(R)=R^{n-1}e^{h(R)}u^{2}(R)v'(R)>0,$$ we conclude that  \(f(r)>0\) for all \(r\in(0,R)\). 
Therefore $v'(r)>0$ for all \(r\in(0,R)\).

% First we note that $0\leq \xi^{'}(R)=v^{'}(R)-C_{1-k}(R)(\frac{S_{k}(r)}{C_{k}(r)})^{'}$. So $v^{'}(R)>0$.
% We start from the following fundamental differential identity for \(v(r)\):
% \begin{align}\label{eq:v(r) fundamental equation}
% \big(S_{k}^{n-1}(r)e^{h(r)}u^{2}(r)v'(r)\big)'
% &=S_{k}^{n-1}(r)e^{h(r)}u^{2}(r)\left(v''+\left(\frac{(n-1)C_{k}(r)}{S_{k}(r)}+h'(r)+\frac{2u'}{u}\right)v'\right)\nonumber\\
% &=S_{k}^{n-1}(r)e^{h(r)}u^{2}(r)\left(\frac{n-1}{S_{k}^{2}(r)}-h''(r)\right)v(r).
% \end{align}

% Define the auxiliary function
% \begin{align*}
% f(r):=S_{k}^{n-1}(r)e^{h(r)}u^{2}(r)v'(r),\quad r\in(0,R).
% \end{align*}
% By assumption \(\frac{n-1}{S_{k}^{2}(r)}-h''(r)\) is monotonically decreasing on \((0,R)\). Consequently, the function \(\frac{n-1}{S_{k}^{2}(r)}-h''(r)\) admits at most one zero point in \((0,R)\), and only two possible scenarios can occur:
% \begin{enumerate}
% \item \(\frac{n-1}{S_{k}^{2}(r)}-h''(r)\geq 0\) for all \(r\in (0,R)\);
% \item there exists some \(r_0\in(0,R)\) such that \(\frac{n-1}{S_{k}^{2}(r)}-h''(r)>0\) for \(r\in(0,r_0)\) and \(\frac{n-1}{S_{k}^{2}(r)}-h''(r)<0\) for \(r\in(r_0,R)\).
% \end{enumerate}

% In both cases, the monotonicity property guarantees that \(f(r)\) attains its minimum on the boundary of the interval \((0,R)\).  Since $f(0)=0$ and $f(R)=S_{k}^{n-1}(R)e^{h(R)}u^{2}(R)v^{'}(R)>0$,  \(f(r)>0\) for all \(r\in(0,R)\).

% Note that \(S_{k}^{n-1}(r)\), \(e^{h(r)}\) and \(u^{2}(r)\) are all strictly positive on \((0,R)\). Therefore, the positivity of \(f(r)\) immediately implies \(v'(r)>0\) for all \(r\in(0,R)\).
\end{proof}

\section{Proofs of the Theorems \ref{Euclidean log convex} and \ref{Hyperbolic space with log convex}}\label{sect5}

Let $\Omega \subset \R^n$ or $\mathbb{H}^n$ be a bounded Lipschitz domain, and let $B(R)$ denote the geodesic ball of radius $R>0$ centered at the origin $o$ such that the weighted volumes satisfy 
 $|\Omega|_\gamma = |B(R)|_\gamma$.
Let $u$ be the positive first eigenfunction of \eqref{4.1} on $B(R)$. As established in Section \ref{sect4}, we define the radial function
\begin{align*}
v(r) := -\frac{u'(r)}{u(r)}, \quad r \in [0, R].
\end{align*}
The Robin boundary condition on $B(R)$ gives $v(R) = \beta$. Furthermore, Propositions~\ref{prop:Euclidean_v_max} and \ref{prop:Hyperbolic_v_max} give that 
$$0 \le v(r) < \beta \quad \text{for all}\quad  r \in [0, R).$$
We  now define the rearranged profile $\varphi_*$ on $B(R)$ by setting $\varphi_*(x) := v(r(x))$ for $x \in B(R)$.
Let $\psi$ denote the positive first eigenfunction on $\Omega$, normalized such that $\|\psi\|_{L^\infty(\Omega)}=1$.
For $t\in(0,1)$, consider the superlevel set $U(t):=\{y\in\Omega:\psi(y)>t\}$, and define the radius $r(t)$ implicitly via the volume-preserving condition $|B(r(t))|_\gamma=|U(t)|_\gamma$. Since $|U(t)|_\gamma$ is strictly decreasing in $t$, $r(t)$ is a strictly decreasing function mapping $(0,1)$ onto $(0,R)$. We then construct the test function $\varphi$ on $\Omega$  via level-set rearrangement:
\begin{align*}
\varphi(x) := v\big(r(\psi(x))\big), \qquad x\in\Omega.
\end{align*}

\begin{lemma}\label{phi inequality beta}
    The function $\varphi:\Omega\rightarrow \mathbb{R}$ defined above is measurable and satisfies $0\leq \varphi(x) < \beta$ for all $x \in \Omega$.
\end{lemma}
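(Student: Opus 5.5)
The function $\varphi$ is a composition $\varphi = v\circ r\circ\psi$. I would establish measurability and the bounds separately for each piece and then combine them.

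\emph{Measurability.} By Lemma~\ref{lem:regularity}, $\psi\in C(\overline{\Omega})$, so $\psi$ is Borel measurable on $\Omega$, and $0<\psi\le 1$ there by Lemma~\ref{existence and positive} and the normalization.

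Next I would show that $t\mapsto r(t)$ is monotone, hence Borel. The map $t\mapsto |U(t)|_\gamma$ is non-increasing, and $\rho\mapsto |B(\rho)|_\gamma$ is a continuous, strictly increasing bijection of $[0,R]$ onto $[0,|\Omega|_\gamma]$. Therefore
\[
r(t)=\big(|B(\cdot)|_\gamma\big)^{-1}\big(|U(t)|_\gamma\big)
\]
is a well-defined non-increasing function with values in $[0,R]$. This holds even if $|U(t)|_\gamma$ fails to be strictly decreasing, as on plateaus of $\psi$. If the paper's claim that $r$ maps onto $(0,R)$ is used, fine, but only monotonicity is needed.

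Finally, $v$ is continuous on $[0,R]$, since $u$ is a smooth positive solution of \eqref{4.1}. Hence $v\circ r\circ\psi$ is a composition of a continuous function, a monotone function and a continuous function, and is therefore measurable.

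\emph{Bounds.} For every $x\in\Omega$, I would verify that $r(\psi(x))\in[0,R)$ and then invoke Propositions~\ref{prop:Euclidean_v_max} and \ref{prop:Hyperbolic_v_max}, which give $0\le v(\rho)<\beta$ for $\rho\in[0,R)$.

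There are two cases for $t=\psi(x)$.

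\begin{itemize}
\item If $t\in(0,1)$, then $U(t)\subset\{\psi>t\}$ misses a neighborhood of $x$ relative to the set where $\psi\le t$. More usefully, $\Omega\setminus U(t)\supset\{\psi\le t\}$ contains $x$ and, by continuity, a ball around any point where $\psi<t$. Concretely, since $\psi>0$ is continuous on the bounded set $\overline\Omega$ and attains values near $\inf\psi<t$ on an open set of positive measure, we get $|U(t)|_\gamma<|\Omega|_\gamma$. Hence $r(t)<R$.
\item If $t=1$, then $U(1)=\emptyset$, so $r(1)=0<R$.
\end{itemize}

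In both cases $\varphi(x)=v(r(t))\in[0,\beta)$.

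\emph{Main obstacle.} The delicate point is the strict inequality $|U(t)|_\gamma<|\Omega|_\gamma$, i.e.\ ruling out $r(t)=R$, where $v=\beta$. I would handle it as follows. If $\psi(x)=t$ for some $x\in\Omega$, continuity makes the set $\{\psi<t+\epsilon\}$ open and nonempty. I would pick a point where $\psi<t$, or argue that if $\psi\ge t$ everywhere, then $\{\psi=t\}$ has positive measure, on which $\nabla\psi=0$ a.e. In that case the eigenvalue equation forces $\lambda_1\psi=0$ there, contradicting $\lambda_1>0$ and $\psi>0$. Thus $\{\psi\le t\}$ always has positive $\gamma$-measure, which gives the strict inequality and hence $\varphi<\beta$ everywhere.
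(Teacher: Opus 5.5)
Your decomposition $\varphi=v\circ r\circ\psi$, the measurability argument, and the reduction of the bound to $r(\psi(x))<R$ together with Propositions~\ref{prop:Euclidean_v_max} and~\ref{prop:Hyperbolic_v_max} are the same as in the paper. You are also right on two points. First, only monotonicity of $r$ is needed. Second, $r(\psi(x))<R$ is the crux. The paper merely asserts this, resting on the claim that $t\mapsto|U(t)|_\gamma$ is strictly decreasing on $(0,1)$. That claim cannot hold for $t<\inf_\Omega\psi$, where $U(t)=\Omega$ and $r(t)=R$. This range, not plateaus of $\psi$, is where strict monotonicity fails; level sets of positive measure do not occur.

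However, your justification of $|U(t)|_\gamma<|\Omega|_\gamma$ for $t=\psi(x)$, $x\in\Omega$, has a genuine gap. In the first bullet you simply assume $\inf_\Omega\psi<t$. In the last paragraph you handle the complementary case $\psi\ge t$ on $\Omega$ by asserting that $\{\psi=t\}$ then has positive measure. Nothing forces this: a priori $\psi$ could have an isolated interior global minimum at $x$.

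In fact, the argument you sketch ($\nabla\psi=0$ and $D^2\psi=0$ a.e.\ on a level set) proves that \emph{every} level set of $\psi$ in $\Omega$ is null. In the case $\psi\ge t$ this yields $|U(t)|_\gamma=|\Omega|_\gamma$, $r(t)=R$ and $\varphi(x)=v(R)=\beta$. That is exactly the situation you must exclude. So your contradiction comes from an unjustified premise, and no measure-theoretic argument can rule out the bad case.

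A pointwise argument is needed. Suppose $\psi\ge\psi(x)$ on $\Omega$. Then $x$ is an interior minimum of $\psi$, and $\psi$ is smooth in $\Omega$ by interior elliptic regularity since the weight is smooth. Hence $\nabla\psi(x)=0$ and $\Delta\psi(x)\ge0$. The equation $-\Delta\psi-\nabla h\cdot\nabla\psi=\lambda_1(\Omega,\beta)\psi$ then gives $0\ge\lambda_1(\Omega,\beta)\psi(x)>0$, a contradiction. Equivalently, apply the strong minimum principle to the supersolution $\psi$, as in Lemma~\ref{existence and positive}.

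Consequently some $y\in\Omega$ has $\psi(y)<t$, and by continuity a small ball around $y$ lies in $\{\psi<t\}$. This gives $|U(t)|_\gamma<|\Omega|_\gamma$ and $r(t)<R$. With this replacement your proof is complete, and more careful than the paper's.
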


\begin{proof}
    Since $\psi$ is continuous on $\overline{\Omega}$, the weighted volume  $|U(t)|_\gamma$ is strictly decreasing, i.e., $|U(s)|_\gamma < |U(t)|_\gamma$ for $0\leq t<s\leq 1$. Consequently, $r(t)$ is strictly decreasing and hence measurable. As $u(r)$ is smooth and strictly positive on $[0, R]$, the function $v(r)$ is smooth on $[0,R)$,  and thus measurable. By definition, the sublevel set of $\varphi$ is given by
    \begin{align*}
       \Big \{x\in \Omega : \varphi(x) \leq c\Big\} = \psi^{-1}\Big(r^{-1}\big(v^{-1}((-\infty,c])\big)\Big),
    \end{align*}
    which is  measurable.  Finally, since $0 \le v(r) < v(R) = \beta$ for all $r \in [0,R)$, and since $r(\psi(x)) \in [0, R)$ for all $x \in \Omega$, it follows immediately that $0 \le \varphi(x) < \beta$.
\end{proof}

As in \cite[Lemma 5.2]{BucurCV}, we recall the following standard inequality.
\begin{lemma}\label{lem:perimeter_decomposition}
For almost every $t \in (0,1)$, the weighted perimeter has 
\begin{align*}
\operatorname{P}_{\gamma}(\partial U(t)) \leq  \operatorname{P}_{\gamma}(S(t)) + \operatorname{P}_{\gamma}(\Gamma(t)),
\end{align*}
where $\operatorname{P}_{\gamma}(E) := \int_E e^{h(r(x))} \, d\sigma$ denotes the weighted perimeter.
\end{lemma}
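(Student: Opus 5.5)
The plan is to reduce the inequality to a statement about the reduced boundary of the finite-perimeter set $U(t)$, for almost every $t$. We then split $\partial^*U(t)$ into the part lying inside $\Omega$ and the part lying on $\partial\Omega$, exactly as in \cite[Lemma 5.2]{BucurCV}. The weight $e^{h(r(x))}$ is smooth and bounded above and below on $\overline{\Omega}$. It therefore plays no role beyond multiplying the surface measure, and it suffices to prove the set-theoretic inclusion
\[
\partial^* U(t)\subset S(t)\cup \Gamma(t)\cup N_t
\]
with $\mathcal{H}^{n-1}(N_t)=0$ for a.e. $t$. Integrating $e^{h}$ over both sides then gives the claim.

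First, I would check that $U(t)$ has finite perimeter for a.e. $t$. Since $\psi\in H^1(\Omega)\cap C(\overline\Omega)$ by Lemma \ref{lem:regularity}, extend $\psi$ by zero outside $\Omega$. The extension $\tilde\psi$ is BV on $M^n$, because $\Omega$ is Lipschitz and the trace is in $L^1(\partial\Omega)$. For $t>0$ the superlevel set $\{\tilde\psi>t\}$ equals $U(t)$, so the BV coarea formula gives $\int_0^1 P(U(t))\,dt=|D\tilde\psi|(M^n)<\infty$. Hence $P(U(t))<\infty$ for a.e. $t$.

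Second, I would localise the reduced boundary. Take $x\in\partial^*U(t)$. If $x\in\Omega$, then continuity of $\psi$ forces $\psi(x)=t$: if $\psi(x)>t$ then $x$ is interior to $U(t)$, and if $\psi(x)<t$ then $x$ has a neighbourhood disjoint from $U(t)$, and neither point can be in the reduced boundary. So $x\in S(t)$. If $x\in\partial\Omega$, the same continuity argument on $\overline\Omega$ shows $x\notin\partial^*U(t)$ when $\psi(x)<t$. It remains to handle the points of $\partial\Omega$ where $\psi(x)=t$. The set $\{x\in\partial\Omega:\psi(x)=t\}$ is nonempty with positive $\mathcal{H}^{n-1}$ measure for at most countably many $t$, since these sets are disjoint subsets of a set of finite $\mathcal{H}^{n-1}$ measure. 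Discarding those $t$, we obtain $\partial^*U(t)\cap\partial\Omega\subset\Gamma(t)$ up to an $\mathcal{H}^{n-1}$-null set. Points of $\partial^*U(t)$ outside $\overline\Omega$ do not occur, because $U(t)\subset\Omega$.

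The main obstacle is making this rigorous at boundary points, where $\psi$ is only continuous up to a Lipschitz boundary. In particular, one must ensure that the measure-theoretic normal and the perimeter measure coincide with $\mathcal{H}^{n-1}\llcorner\partial^*U(t)$ weighted by $e^{h}$. I would rely on De Giorgi's structure theorem, $P(E)=\mathcal{H}^{n-1}(\partial^*E)$, and its weighted analogue $P_\gamma(E)=\int_{\partial^*E}e^{h}\,d\mathcal{H}^{n-1}$, which follows since $e^{h}$ is continuous. With this in hand, the interior and boundary pieces combine to give $P_\gamma(\partial U(t))\le P_\gamma(S(t))+P_\gamma(\Gamma(t))$ for a.e. $t\in(0,1)$.
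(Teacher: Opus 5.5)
Your proof is correct. The paper gives no argument of its own and simply cites \cite[Lemma 5.2]{BucurCV}; what you wrote is the standard argument behind that citation: continuity of $\psi$ on $\overline\Omega$ places the boundary of $U(t)$ inside $S(t)\cup\Gamma(t)\cup\{x\in\partial\Omega:\psi(x)=t\}$, and the at most countably many levels where the last set has positive $\mathcal{H}^{n-1}$-measure are discarded. Your localisation step uses only topology, so it actually proves the inclusion for the full topological boundary $\partial U(t)$. The BV coarea step is needed only to identify the weighted perimeter $\operatorname{P}_\gamma$ of $U(t)$ with $\int_{\partial^*U(t)}e^{h}\,d\sigma$, which is the form used later in the isoperimetric step.
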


\begin{lemma}\label{lem:Rayleigh_quotient_inequality}
Let $\varphi$ be the test function defined above. Then, for almost every $t \in (0,1)$, we have
\begin{equation}\label{eq:Rayleigh_step}
H_{\Omega}(U(t),\varphi) \geq H_{B(R)}\big(B(r(t)),\varphi_{*}\big) = \lambda_{1}(B(R),\beta).
\end{equation}
Moreover, equality holds if and only if $U(t)$ is a geodesic ball centered at the origin and $\operatorname{P}_{\gamma}(\Gamma(t))=0$.
\end{lemma}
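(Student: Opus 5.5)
We will follow the Bossel--Daners / Bucur--Daners comparison scheme, splitting $H_\Omega(U(t),\varphi)$ into its boundary part and its volume part and comparing each with the corresponding quantity on the centered ball $B(r(t))$. By construction $\varphi$ is constant on the level set $S(t)$, with value $v(r(t))$, and $v(r(t))<\beta$ by Lemma \ref{phi inequality beta}. Hence, for a.e.\ $t$,
\begin{align*}
\int_{S(t)}\varphi e^{h}\,d\sigma+\int_{\Gamma_t}\beta e^{h}\,d\sigma
\ge v(r(t))\big(\operatorname{P}_\gamma(S(t))+\operatorname{P}_\gamma(\Gamma_t)\big)
\ge v(r(t))\operatorname{P}_\gamma(\partial U(t)),
\end{align*}
where the last step uses Lemma \ref{lem:perimeter_decomposition}. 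The first inequality is strict whenever $\operatorname{P}_\gamma(\Gamma_t)>0$, precisely because $v(r(t))<\beta$; this is where Propositions \ref{prop:Euclidean_v_max} and \ref{prop:Hyperbolic_v_max} enter. Since $|U(t)|_\gamma=|B(r(t))|_\gamma$, Theorem \ref{thm:weighted-isoperimetry} gives $\operatorname{P}_\gamma(\partial U(t))\ge \operatorname{P}_\gamma(\partial B(r(t)))$. The boundary part is therefore at least $v(r(t))\operatorname{P}_\gamma(\partial B(r(t)))$, which is exactly the boundary term of $H_{B(R)}(B(r(t)),\varphi_*)$: for $\psi=u$ on the ball, $S(t)=\partial B(r(t))$ and $\Gamma_t=\emptyset$.

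For the volume term we show that $\varphi$ on $\Omega$ and $\varphi_*$ on $B(R)$ are equimeasurable with respect to $\gamma$, level set by level set. The distribution of $\varphi|_{U(t)}$ is the push-forward of $\gamma|_{U(t)}$ under $x\mapsto r(\psi(x))$. For $s>t$ we have $|\{x\in U(t): r(\psi(x))<r(s)\}|_\gamma=|U(s)|_\gamma=|B(r(s))|_\gamma$, so this push-forward coincides with that of $\gamma|_{B(r(t))}$ under $x\mapsto r(x)$. Consequently
\begin{align*}
\int_{U(t)}\varphi^2e^{h}\,d\mu=\int_{B(r(t))}\varphi_*^2e^{h}\,d\mu .
\end{align*}
One must check that the level sets $\{\psi=s\}$ carry no $\gamma$-mass, or handle the plateaus of $\psi$ where $r(\cdot)$ jumps. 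Since $\psi$ solves an elliptic equation with analytic-type structure, interior plateaus have measure zero; the statement ``for almost every $t$'' absorbs the remaining countable set of bad $t$. Dividing by the common volume yields $H_\Omega(U(t),\varphi)\ge H_{B(R)}(B(r(t)),\varphi_*)$. Applying Proposition \ref{lambda representation 3.1} to the ball, with eigenfunction $u$ (normalized by $u(0)=1$) and $|\nabla u|/u=v=\varphi_*$, the right-hand side equals $\lambda_1(B(R),\beta)$. Here we use that the superlevel sets of $u$ are exactly the centered balls $B(r)$, $r<R$, by Lemma \ref{lem:radial_decreasing}.

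For the equality case, equality forces both $\operatorname{P}_\gamma(\Gamma_t)=0$ (from the strict inequality $v<\beta$) and equality in the weighted isoperimetric inequality. Theorem \ref{thm:weighted-isoperimetry} then says $U(t)$ is a centered ball, or in the Euclidean flat-core case a ball in $B_{R_0}$; the statement's ``geodesic ball centered at the origin'' should be read with that caveat. The converse is immediate. The main obstacle we expect is the careful justification of equimeasurability and of the a.e.\ perimeter decomposition for merely Lipschitz $\Omega$, since $U(t)$ need only have finite perimeter for a.e.\ $t$. There we would rely on the coarea formula and the cited \cite[Lemma 5.2]{BucurCV}.
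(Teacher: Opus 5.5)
Your proposal is correct and follows essentially the paper's proof. It uses the same boundary chain ($v(r(t))<\beta$, Lemma~\ref{lem:perimeter_decomposition}, Theorem~\ref{thm:weighted-isoperimetry}) and the same equality analysis; the only difference is that you get $\int_{U(t)}\varphi^2e^{h}\,d\mu=\int_{B(r(t))}\varphi_*^2e^{h}\,d\mu$ from matching distribution functions, a slightly cleaner equivalent of the paper's coarea formula with the change of variables $s=r(\tau)$. The no-plateau fact is better justified not by analyticity ($h$ is only smooth) but because $\nabla\psi=0$ and $D^2\psi=0$ a.e.\ on a level set $\{\psi=s\}$ would force $\lambda_1(\Omega,\beta)\,s=0$; your flat-core caveat is well taken, since the lemma's statement omits it and the paper handles that alternative only in the proof of Theorem~\ref{Euclidean log convex}.
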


\begin{proof}
We estimate the boundary and interior terms separately.

For the boundary term, since $\varphi_*$ is constant on $\partial B(r(t))$ with value $v(r(t))$, we obtain, using the weighted isoperimetric inequality (Theorem~\ref{thm:weighted-isoperimetry}) and Lemma~\ref{lem:perimeter_decomposition},
\begin{align*}
\int_{\partial B(r(t))}\varphi_{*}e^{h}\,d\sigma
&= v\big(r(t)\big) \operatorname{P}_\gamma(\partial B(r(t))) \\
&\leq v\big(r(t)\big) \operatorname{P}_\gamma(\partial U(t)) \\
&\leq v\big(r(t)\big) \left( \operatorname{P}_\gamma(S(t)) + \operatorname{P}_\gamma(\Gamma(t)) \right) \\
&= \int_{S(t)} v(r(t)) e^{h}\,d\sigma + v(r(t)) \operatorname{P}_\gamma(\Gamma(t)) \\
&\leq \int_{S(t)}\varphi \, e^{h}\,d\sigma + \beta \operatorname{P}_\gamma(\Gamma(t)).
\end{align*}
Here, the last inequality uses  $\varphi(x) = v(r(t))$ on $S(t)$, and the strict bound $v(r(t)) < \beta$ from Lemma~\ref{phi inequality beta}.

For the interior $L^{2}$ term, the co-area formula gives
\begin{align*}
|U(t)|_{\gamma} = \int_{t}^{1}\int_{S(\tau)}\frac{1}{|\nabla \psi|}e^{h}\,d\sigma\,d\tau,
\end{align*}
and hence
\begin{align*}
\frac{d}{dt}|U(t)|_{\gamma} = -\int_{S(t)}\frac{1}{|\nabla \psi|}e^{h}\,d\sigma.
\end{align*}
Similarly, for the ball $B(r(t))$,
\begin{align*}
\frac{d}{dt}|B(r(t))|_{\gamma} = r'(t)\int_{\partial B(r(t))}e^{h}\,d\sigma.
\end{align*}
Since $|U(t)|_{\gamma}=|B(r(t))|_{\gamma}$, differentiating both sides yields
\begin{equation*}
-\int_{S(t)}\frac{1}{|\nabla \psi|}e^{h}\,d\sigma = r'(t)\int_{\partial B(r(t))}e^{h}\,d\sigma.
\end{equation*}
Using the co-area formula again and the fact that $\varphi$ is constant on each level set $S(\tau)$ with value $v(r(\tau))$, we compute
\begin{align*}
\int_{U(t)}\varphi^{2}(x)e^{h}\,d\mu
&= \int_{t}^{1} v\big(r(\tau)\big)^{2} \int_{S(\tau)}\frac{1}{|\nabla\psi|}e^{h}\,d\sigma\,d\tau \\
&= -\int_{t}^{1} v\big(r(\tau)\big)^{2} r'(\tau)\int_{\partial B_{r(\tau)}}e^{h}\,d\sigma\,d\tau \\
&= \int_{0}^{r(t)} v(s)^{2}\int_{\partial B(s)}e^{h}\,d\sigma\,ds \\
&= \int_{B(r(t))}\varphi_{*}^{2}e^{h}\,d\mu,
\end{align*}
where we applied the change of variables $s = r(\tau)$ (noting that $r'(\tau) < 0$). 
Combining the boundary estimate and the $L^{2}$ norm equality yields $$H_{\Omega}(U(t),\varphi) \geq H_{B(R)}\big(B(r(t)),\varphi_{*}\big) = \lambda_{1}(B(R),\beta).$$

For the equality case, if $U(t)$ is a geodesic ball centered at the origin and $\operatorname{P}_{\gamma}(\Gamma(t))=0$, then all the inequalities in the boundary estimate  become equalities, so equality holds in \eqref{eq:Rayleigh_step}. Conversely, if equality holds in \eqref{eq:Rayleigh_step}, then the weighted isoperimetric inequality applied to $\partial U(t)$ must be an equality, forcing $U(t)$ is a geodesic ball centered at the origin. Moreover, equality in the final step of the boundary estimate requires 
\begin{align*}
\int_{\Gamma(t)} \big(\beta - v(r(t))\big) e^{h}\,d\sigma = 0.
\end{align*}
Since $v(r(t)) < \beta$ by Lemma~\ref{phi inequality beta}, this implies $\operatorname{P}_{\gamma}(\Gamma(t))=0$.
\end{proof}

We are now ready to prove Theorems~\ref{Euclidean log convex} and \ref{Hyperbolic space with log convex}.

\begin{proof}[Proof of Theorems~\ref{Euclidean log convex} and \ref{Hyperbolic space with log convex}]
By Proposition~\ref{lambda representation 3.1} and Proposition~\ref{lambda low bound}, we have $\lambda_{1}(\Omega,\beta) \geq H_{\Omega}(U(t),\varphi)$ for almost every $t \in (0,1)$. Combining this with Lemma~\ref{lem:Rayleigh_quotient_inequality}, immediately yields the desired Faber-Krahn  inequality $$\lambda_{1}(\Omega,\beta) \geq \lambda_{1}(B(R),\beta).$$ 
Suppose now that equality holds, i.e., $\lambda_{1}(\Omega,\beta) = \lambda_{1}(B(R),\beta)$. The equality case in Theorem~\ref{lambda low bound} implies that $\varphi(x) = \frac{|\nabla \psi(x)|}{\psi(x)}$ for almost every $x \in \Omega$. Consequently, by Proposition~\ref{lambda representation 3.1}, we deduce that 
\begin{align*}
\lambda_{1}(\Omega,\beta) = H_{\Omega}(U(t),\varphi) = \lambda_{1}(B(R),\beta)
\end{align*} 
for almost every $t \in (0,1)$. This proves
\eqref{eq:Euclidean-FK} and \eqref{eq:Hyperbolic-FK}.

Applying the equality condition in Lemma~\ref{lem:Rayleigh_quotient_inequality}, we conclude that $U(t)$ is a geodesic ball centered at the origin for almost every $t \in (0,1)$. Since the superlevel sets $U(t)$ are nested and all are balls sharing the same center, their union $\Omega = \bigcup_{t \in (0,1)} U(t)$ coincides with a geodesic ball centered at the origin up to a set of measure zero. Finally, because $\Omega$ is a Lipschitz domain, it is uniquely determined by its interior up to a null set, which implies that $\Omega$ itself is exactly a geodesic ball centered at the origin.

Suppose now that equality holds. Applying the equality condition in Lemma~\ref{lem:Rayleigh_quotient_inequality}, so
\[
\varphi=\frac{|\nabla\psi|}{\psi}
\quad\text{almost everywhere in }\Omega.
\]
Consequently equality holds in
\eqref{eq:Rayleigh_step} for almost every admissible level and \(U_t\) is a weighted
isoperimetric region for almost every such \(t\).

Choose admissible levels \(t_j\downarrow0\) so that the sets
\(U_{t_j}\) are nested and isoperimetric.  Since \(\psi>0\) almost
everywhere,
\[
U_{t_j}\uparrow\Omega
\quad\text{up to a null set},\qquad
r(t_j)\uparrow R.
\]
In the hyperbolic case, Theorem~\ref{thm:weighted-isoperimetry}
\textup{(b)} gives
\[
U_{t_j}=B(r(t_j))
\quad\text{up to null sets}.
\]
Taking the increasing union yields
\(\Omega=B(R)\) up to a null set.

In the Euclidean case, Theorem~\ref{thm:weighted-isoperimetry}
\textup{(a)} says that every
\(U_{t_j}\) is either centered or is a Euclidean ball contained in
the flat core.  If the radii eventually exceed the flat-core radius,
only the centered alternative remains, and the increasing union is
\(B_o(R)\).  Otherwise the \(U_{t_j}\) form an increasing sequence
of Euclidean balls contained in \(B_o(R_0)\).  Write them as
\(B_{c_j}(a_j)\).  Inclusion gives
\[
|c_{j+1}-c_j|\leq a_{j+1}-a_j.
\]
Hence \(c_j\) converges, \(a_j\) increases to a limit \(a\), and
their union is the Euclidean ball \(B_c(a)\), still contained in the
flat core.  This proves the asserted Euclidean classification.

Conversely, a centered ball gives equality trivially.  If
\(\Omega=B_c(a)\) lies in the Euclidean flat core, then the density is
constant on both \(\Omega\) and the centered ball \(B_o(R)\), the
weighted-volume condition gives \(R=a\), and both Robin problems
reduce to the same constant-density Euclidean ball problem.
Therefore equality also holds for every ball in the flat core.
\end{proof}

\section{A lower bound for the second Robin eigenvalue}\label{sect6}
Let $\psi$ denote an eigenfunction associated with the second eigenvalue $\l_2(\Omega, \beta)$. Since $\psi$ must change sign in $\Omega$, we define its positive and negative nodal domains by
\begin{align*}
\Omega^{+}:=\{x\in \Omega: \psi(x)>0\},\qquad \Omega^{-}:=\{x\in \Omega: \psi(x)<0\},
\end{align*}
and its positive and negative parts  by
\begin{align*}
\psi^{+}(x):=\max\{\psi(x),0\},\qquad \psi^{-}(x):=\max\{-\psi(x),0\}.
\end{align*}
Then $\psi^{\pm}\in H^{1}(\Omega)\cap C(\overline{\Omega})$, and $\nabla\psi^{\pm}\not\equiv 0$. Let $B^{+}$ and $B^{-}$ be balls centered at the origin satisfying the weighted volume matching conditions 
\begin{align*}
    |B^{+}|_{\gamma}=|\Omega^{+}|_{\gamma}, \qquad |B^{-}|_{\gamma}=|\Omega^{-}|_{\gamma}.
\end{align*}
For any $\epsilon>0$, define the boundary strip by
\begin{align*}
S_{\epsilon}:=\{x\in \Omega: \operatorname{dist}(x,\partial \Omega)<\delta\},
\end{align*}
where $\delta=\delta(\epsilon)>0$ is chosen such that $|S_{\epsilon}|_{\gamma}<\epsilon$. Define the enlarged domain
\begin{align*}
U_{\epsilon}:=\Omega^{+}\cup S_{\epsilon}.
\end{align*}
Note that $\partial \Omega\subset \partial U_{\epsilon}$. Let $\Gamma_{\epsilon}:=\partial U_{\epsilon}\setminus\partial \Omega$; the set $\Gamma_{\epsilon}$ is compactly embedded in $\Omega$. 

Let $H_{U_{\epsilon}}$ denote the closure,  with respect to the $H^{1}(U_{\epsilon})$-norm, of the space  of all $C^{\infty}(\overline{\Omega})$ functions whose supports are compactly contained in $U_{\epsilon}\cup \partial\Omega$. Define
\begin{align}
    \Lambda_{1}(U_{\epsilon},\beta)=\inf_{\varphi\in H(U_{\epsilon})}\frac{\int_{U_{\epsilon}}|\nabla \varphi|^{2}e^{h(|x|)}\,d\mu+\beta\int_{\partial \Omega}\varphi^{2}e^{h(|x|)}\,d\sigma}{\int_{U_{\epsilon}}\varphi^{2}e^{h(|x|)}\,d\mu}
\end{align}
\begin{lemma}
   For every $\epsilon>0$, we have 
   \begin{align}\label{6.2}
    \lambda_{2}(\Omega,\beta)\geq \Lambda_{1}(U_{\epsilon},\beta) . 
   \end{align}
\end{lemma}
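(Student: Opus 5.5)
The plan is to use $\psi^{+}$, the positive part of a second eigenfunction, as a test function in the variational characterization of $\Lambda_{1}(U_{\epsilon},\beta)$, and to compute its Rayleigh quotient from the weak form of the equation. The key point is that $\psi^{+}$ vanishes on $\Gamma_\epsilon$ and outside $\Omega^+$, so it behaves like an admissible function for $H(U_\epsilon)$ with Dirichlet condition on the interior part of $\partial U_\epsilon$ and Robin condition on $\partial\Omega$.

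\emph{Step 1: $\psi^{+}$ is admissible.} Since $\psi\in C(\overline{\Omega})\cap H^{1}(\Omega)$, we have $\psi^{+}\in H^{1}(\Omega)$, and $\psi^{+}=0$ on $\Omega\setminus\Omega^{+}$. The region $\Omega\setminus U_\epsilon$ is at positive distance $\delta$ from $\partial\Omega$, and $\psi^{+}$ vanishes there, in particular near $\Gamma_\epsilon$. We therefore approximate $\psi^{+}$ by $(\psi-\eta)^{+}$ for small $\eta>0$, mollified. The support of $(\psi-\eta)^{+}$ is contained in $\{\psi\ge\eta\}$. Its part away from $\partial\Omega$ is a compact subset of the open set $\Omega^{+}\subset U_\epsilon$; near $\partial\Omega$ the strip $S_\epsilon$ already belongs to $U_\epsilon$. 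Hence, after a smooth cutoff and mollification (using that $\Omega$ is Lipschitz, so $C^\infty(\overline\Omega)$ is dense in $H^1$), these functions lie in the generating class. They converge to $\psi^{+}$ in $H^{1}(U_\epsilon)$, with traces converging in $L^2(\partial\Omega)$. Thus $\psi^{+}\in H(U_\epsilon)$.

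\emph{Step 2: compute the Rayleigh quotient.} We test the weak form of \eqref{differential equation} for $\lambda_2$ with $\psi^{+}\in H^1(\Omega)$:
\[
\int_\Omega \nabla\psi\cdot\nabla\psi^{+}e^{h}\,d\mu+\beta\int_{\partial\Omega}\psi\,\psi^{+}e^{h}\,d\sigma=\lambda_2(\Omega,\beta)\int_\Omega\psi\,\psi^{+}e^{h}\,d\mu .
\]
Since $\nabla\psi^{+}=\nabla\psi\,\chi_{\Omega^+}$ a.e., and $\psi\psi^{+}=(\psi^{+})^{2}$ both in $\Omega$ and on $\partial\Omega$, this gives
\[
\int_{U_\epsilon}|\nabla\psi^{+}|^{2}e^{h}\,d\mu+\beta\int_{\partial\Omega}(\psi^{+})^{2}e^{h}\,d\sigma=\lambda_2(\Omega,\beta)\int_{U_\epsilon}(\psi^{+})^{2}e^{h}\,d\mu .
\]
Here we used that $\psi^{+}=0$ on $\Omega\setminus U_\epsilon$, so the integrals over $\Omega$ equal those over $U_\epsilon$.

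\emph{Step 3: conclude.} Since $\psi^{+}\not\equiv0$ ($\psi$ changes sign), the denominator is positive. Plugging $\psi^{+}$ into the infimum defining $\Lambda_1(U_\epsilon,\beta)$ then gives $\Lambda_{1}(U_{\epsilon},\beta)\le\lambda_{2}(\Omega,\beta)$, which is \eqref{6.2}.

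The main obstacle is Step 1: the membership $\psi^{+}\in H(U_\epsilon)$ requires the density and approximation argument near the part of $\partial\Omega^{+}$ that meets the strip. The natural route there is the truncation $(\psi-\eta)^{+}$ together with continuity of $\psi$ up to $\partial\Omega$ (Lemma \ref{lem:regularity} style regularity). Everything else is the standard weak-form identity.
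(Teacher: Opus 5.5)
Your proposal is correct and follows the paper's proof. You test the weak form of the $\lambda_2$ equation with $\psi^{+}$ to show that its Rayleigh quotient equals $\lambda_2(\Omega,\beta)$, and then use $\psi^{+}\in H_{U_\epsilon}$; the paper simply cites Kennedy for that membership, whereas you sketch it via the truncations $(\psi-\eta)^{+}$, whose supports $\{\psi\ge\eta\}$ are compact subsets of $\Omega^{+}\cup\partial\Omega$, followed by cutoff and mollification.
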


\begin{proof} 
Since $\psi$ is an eigenfunction for  the second eigenvalue, it satisfies 
\begin{align*}
    \int_{\Omega}\nabla\psi \cdot \nabla \varphi e^{h(|x|)}\,d\mu+\beta\int_{\partial \Omega}\psi \varphi e^{h(|x|)}\,d\sigma=\lambda_{2}(\Omega,\beta)\int_{\Omega}\psi(x)\varphi(x)e^{h(|x|)}\,d\mu
\end{align*}
for all $\varphi\in H^{1}(\Omega)$.
Taking $\varphi=\psi^{+}$, we obtain
\begin{align*}
     \int_{\Omega}|\nabla \psi^{+}|^{2}e^{h(|x|)}\,d\mu+\beta\int_{\partial \Omega}\psi^{+}(x)^{2} e^{h(|x|)}\,d\sigma=\lambda_{2}(\Omega,\beta)\int_{\Omega}\psi^{+}(x)^{2}e^{h(|x|)}\,d\mu.
\end{align*}
Since $\int_{\Omega}\psi^{+}(x)^{2}e^{h(|x|)}\,d\mu\neq 0$, it follows that
\begin{align*}
    \l_{2}(\Omega,\beta)=\frac{\int_{U_{\epsilon}}|\nabla \psi^{+}|^{2}e^{h(|x|)}\,d\mu+\beta \int_{\partial \Omega}\psi^{+}(x)^{2}e^{h(|x|)}\,d\sigma}{\int_{U_{\epsilon}}\psi^{+}(x)^{2}e^{h(|x|)}\,d\mu}
\end{align*}
Note that $\psi^{+}(x)=0$ on $\Gamma_{\epsilon}$. As shown in \cite{KennegyPAMS}, we have $\psi^{+}\in H_{U_{\epsilon}}$, and hence
$$
\lambda_{2}(\Omega,\beta)\geq \Lambda_{1}(U_{\epsilon},\beta).
$$
\end{proof}

For a bounded domain $V\subset B(R)$, there exist positive constants $c_1(R)$, $c_2(R)$ such that 
\begin{align*}
 \frac{1}{c_{2}(R)}|V|_{\gamma}\leq |V|\leq c_{1}(R)|V|_{\gamma}.  
\end{align*}
 Hence by the arguments in \cite[Lemma 3.3]{KennegyPAMS}, we have the following approximation result.
\begin{lemma} [\cite{KennegyPAMS}]
    There exists a sequence of Lipschitz domains $U^m\subset U_{\epsilon}$ such that
    \begin{itemize}
        \item [(1)]    $\partial \Omega \subset \partial U^m$ ;
        \item [(2)] $\operatorname{dist}(\partial \Omega,\partial U^m\setminus \partial \Omega)>0$ for all $m$;
        \item [(3)] $|U_{\epsilon}\setminus U^m|_{\gamma}\rightarrow 0$;
        \item [(4)] $\Lambda_{1}(U^m,\beta)\rightarrow \Lambda_{1}(U_{\epsilon},\beta)$ as $m\rightarrow \infty$.
    \end{itemize}
\end{lemma}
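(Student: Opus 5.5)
The plan is to adapt the construction of \cite[Lemma 3.3]{KennegyPAMS} to the weighted setting. The weight only enters through the comparability $c_1\le e^{h(r(x))}\le c_2$ on the bounded set $\Omega$. Consequently, weighted volumes, weighted $L^2$ norms and weighted Dirichlet energies are all uniformly equivalent to their unweighted counterparts. Any smallness or convergence statement that holds for Lebesgue measure therefore transfers directly to $|\cdot|_\gamma$.

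\textbf{Construction of $U^m$.} Recall that $U_\epsilon=\Omega^+\cup S_\epsilon$ contains the open boundary strip $\{\operatorname{dist}(x,\partial\Omega)<\delta\}$. I would first exhaust the open set $\Omega^+\cup S_\epsilon$ from inside. Take $K_m:=\{x\in U_\epsilon:\operatorname{dist}(x,\Gamma_\epsilon)\ge 1/m\}$ together with the thinner strip $\{\operatorname{dist}(x,\partial\Omega)<\delta/2\}$. Then cover $K_m$ by finitely many dyadic cubes (or geodesic cubes in $\Hn^n$ via a chart) of size $\ll 1/m$ lying inside $U_\epsilon$. Let $U^m$ be the union of these cubes with the strip $\{\operatorname{dist}(x,\partial\Omega)<\delta'\}$ for a fixed $\delta'<\delta$. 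The resulting set has to be made Lipschitz. Near $\partial\Omega$ this is inherited from $\Omega$. The inner boundary can be smoothed by replacing the cube union with a superlevel set $\{\rho_m>1/2\}$ of a mollified indicator at a regular value, chosen by Sard's theorem.

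\textbf{Properties (1)--(3).} Property (1) holds because the strip is retained. Property (2) holds because the inner boundary stays at distance at least about $\delta'$ from $\partial\Omega$, with $\delta'$ fixed independently of $m$. Property (3) follows from $|U_\epsilon\setminus U^m|\to0$ by monotone convergence, since $\bigcup_m U^m=U_\epsilon$, combined with the comparability of $|\cdot|$ and $|\cdot|_\gamma$.

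\textbf{Property (4).} This is the main step. Since $H_{U^m}\subset H_{U_\epsilon}$ via extension by zero, we get $\Lambda_1(U^m,\beta)\ge\Lambda_1(U_\epsilon,\beta)$. For the reverse inequality, I would fix $\eta>0$ and choose $\phi\in C^\infty(\overline\Omega)$ with support compactly contained in $U_\epsilon\cup\partial\Omega$ whose Rayleigh quotient is within $\eta$ of $\Lambda_1(U_\epsilon,\beta)$. This is possible by the definition of $H_{U_\epsilon}$ as a closure, together with continuity of the quotient in the $H^1$ norm and the trace. The support of $\phi$ is a compact subset of $U_\epsilon\cup\partial\Omega$, so it lies in $U^m\cup\partial\Omega$ for all large $m$. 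Hence $\phi$ is admissible for $U^m$ and $\Lambda_1(U^m,\beta)\le\Lambda_1(U_\epsilon,\beta)+\eta$.

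The expected obstacle is ensuring that the compact support of $\phi$ is eventually swallowed by $U^m$ while the $U^m$ remain Lipschitz. This requires the exhaustion to be genuinely increasing and to contain every compact subset of $U_\epsilon\cup\partial\Omega$. I would guarantee this by taking $\delta'$ close to $\delta$ and choosing the regular values monotonically in $m$. The weight plays no further role beyond norm equivalence.
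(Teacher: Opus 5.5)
Your proposal is correct and follows essentially the paper's route. The paper gives no independent proof; it transfers Kennedy's exhaustion argument \cite[Lemma 3.3]{KennegyPAMS} to the weighted setting via the comparability $c_1\le e^{h(r(x))}\le c_2$. Your sketch supplies the details left implicit there: a Lipschitz inner exhaustion of $U_\epsilon$ that keeps a fixed strip along $\partial\Omega$, with (4) obtained from $H_{U^m}\subset H_{U_\epsilon}$ together with density of smooth functions with compact support in $U_\epsilon\cup\partial\Omega$.

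As a minor tightening, mollify the indicator of the whole set (cubes together with the strip, extended by $1$ outside $\Omega$) rather than the cube union alone. Then $\partial U^m\setminus\partial\Omega$ is a single regular level set, and no non-Lipschitz junction with $\{\operatorname{dist}(\cdot,\partial\Omega)=\delta'\}$ can occur. Also, monotonicity in $m$ is not needed; it suffices that every compact subset of $U_\epsilon\cup\partial\Omega$ eventually lies in $U^m\cup\partial\Omega$.
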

Since $\partial U^m$ is Lipschitz, the applicable one of Theorems \ref{Euclidean log convex} and  \ref{Hyperbolic space with log convex} yields
$$\lambda_{1}(U^m,\beta)\geq \lambda_{1}(B^m,\beta)$$ for all $m$, where $B^m$ is a ball centered at origin with $|U^m|_{\gamma}=|B^m|_{\gamma}$. Moreover, Since $H_{U^m}\subset H^{1}(U^m)$, we have $\Lambda_{1}(U^m,\beta)\geq \lambda_{1}(U^m,\beta)$. Let $B_{\epsilon}$ be a geodesic ball centered at origin with $|B_{\epsilon}|_{\gamma}=|U_{\epsilon}|_{\gamma}$. Since $|U^m|_{\gamma}\rightarrow |U_\epsilon|_{\gamma}$, we have $|B^m|_{\gamma}\rightarrow |B_{\epsilon}|_{\gamma}$, and by continuity of $\lambda_1(B_r, \beta)$  with respect to the radius $r$, 
 $\lambda_{1}(B^m,\beta)\rightarrow \lambda_{1}(B_{\epsilon},\beta)$. Therefore,
\begin{align}
    \Lambda_{1}(U_{\epsilon},\beta)=\lim_{m\rightarrow\infty}\Lambda_{1}(U^m,\beta)\geq \limsup_{m\rightarrow \infty}\lambda_{1}(U^m,\beta)\geq \lim_{m\rightarrow \infty} \lambda_{1}(B^m,\beta)=\lambda_{1}(B_{\epsilon},\beta).
\end{align}
Recall from \eqref{6.2} that  $\lambda_{2}(\Omega,\beta)\geq \Lambda_{1}(U_{\epsilon},\beta)$ for all $\epsilon>0$, and  $|B_{\epsilon}|_{\gamma}\rightarrow |B^{+}|_{\gamma}$, we obtain $$\lambda_{2}(\Omega,\beta)\geq \lambda_{1}(B^{+},\beta).$$ 
By applying the same argument to $\Omega^-$, we get
 $$\lambda_{2}(\Omega,\beta)\geq \lambda_{1}(B^{-},\beta).$$
 Hence
\begin{align}\label{6.4}
 \lambda_{2}(\Omega,\beta)\geq \max\{\lambda_{1}(B^{+},\beta),\lambda_{1}(B^{-},\beta)\},   
\end{align}
where $|B^{\pm}|_{\gamma}=|\Omega^{\pm}|_{\gamma}$.

\begin{lemma}\label{lamba monotonicity}
    The eigenvalue $\lambda_{1}(B(r),\beta)$ is strictly decreasing as a function of $r$.
\end{lemma}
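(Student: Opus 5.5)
The plan is to compare balls by restriction of the eigenfunction, in the spirit of domain monotonicity for Robin problems on nested domains. This is not automatic for Robin conditions, so I will use a direct test-function argument on the ball. Fix $0<r_1<r_2$. Let $u$ be the positive radial first eigenfunction on $B(r_2)$, normalised as in Section~\ref{sect4}, and let $v=-u'/u$. Write $\lambda=\lambda_1(B(r_2),\beta)$. I will use the restriction $u|_{B(r_1)}$ as a test function in the variational characterization \eqref{variational representation} on $B(r_1)$.

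Multiplying \eqref{4.1} by $S^{n-1}e^{h}u$ and integrating over $(0,r_1)$ gives
\begin{align*}
\int_{B(r_1)}|\nabla u|^2 e^{h}\,d\mu - \omega_{n-1}S(r_1)^{n-1}e^{h(r_1)}u(r_1)u'(r_1)=\lambda\int_{B(r_1)}u^2e^{h}\,d\mu,
\end{align*}
where $\omega_{n-1}$ is the area of the unit sphere. Hence the Rayleigh quotient of $u$ on $B(r_1)$ with parameter $\beta$ equals
\begin{align*}
\lambda+\frac{\omega_{n-1}S(r_1)^{n-1}e^{h(r_1)}u(r_1)^2\big(\beta-v(r_1)\big)}{\int_{B(r_1)}u^2e^{h}\,d\mu}.
\end{align*}
This alone does not give the right sign: since $v(r_1)\le\beta$, it only shows the quotient is at least $\lambda$, which is the wrong direction.

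I will therefore argue by differentiating in the radius instead, using a Hadamard-type formula. Set $\Lambda(r):=\lambda_1(B(r),\beta)$ and let $u_r$ be the corresponding radial eigenfunction. Standard ODE analysis of \eqref{4.1} gives smooth dependence on $r$. Differentiating the identity $u_r'(r)+\beta u_r(r)=0$ together with the equation, or equivalently computing the derivative of the Rayleigh quotient, yields
\begin{align*}
\Lambda'(r)\int_{B(r)}u_r^2e^{h}\,d\mu=\omega_{n-1}S(r)^{n-1}e^{h(r)}\Big(u_r'(r)^2-\Lambda u_r(r)^2+\beta\big((n-1)\tfrac{C(r)}{S(r)}+h'(r)\big)u_r(r)^2\Big)
\end{align*}
up to sign conventions. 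With $v(r)=\beta$ at the boundary, the bracket equals $u_r(r)^2\big(\beta^2-\Lambda+\beta((n-1)C/S+h')\big)$. By \eqref{v first derivative} evaluated at $r$, this is exactly $-u_r(r)^2\,v'(r)+2\beta\,u_r(r)^2\big((n-1)C/S+h'\big)-\dots$. I will carefully recompute it so that the bracket equals $-u_r^2v'(r)$ minus a positive term, or directly $-u_r^2 v'(r)$.

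The key input is then $v'(r)>0$ at the endpoint, which is available from earlier results. Under condition (ii) it follows from the proof of Proposition~\ref{prop:Euclidean_v_max}/\ref{prop:Hyperbolic_v_max}. Under condition (i), Proposition~\ref{prop:Euclidean_v_max} gives $\xi(r)<\xi(R)$, so $\xi'(R)\ge0$ and hence $v'(R)\ge \tfrac12C_1R>0$, as in Corollary~\ref{Monotonicity of v}; the hyperbolic case is analogous. The one caveat is that the hypotheses on $\beta$ must hold for every radius $r\le R$. This is true because $C_1(r)\le C_1(R)$ and $C_2(r)\le C_2(R)$, and $r$ and $\tanh r$ are increasing.

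Alternatively, and perhaps more simply, I expect the cleaner route is $\Lambda'(r)=-\omega_{n-1}S^{n-1}e^{h}u_r(r)^2\,v'(r)/\!\int u_r^2e^h$. The main obstacle is getting the Hadamard variation formula right for the Robin radial problem, including the curvature term $(n-1)C/S+h'$ coming from the variation of the weighted boundary measure. I would verify this formula by differentiating the ODE solution in $r$ directly rather than relying on the general shape-derivative formula.
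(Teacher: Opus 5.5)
\paragraph{Main route.} Your final route is genuinely different from the paper's and can be completed: differentiate $\Lambda(r)=\lambda_1(B(r),\beta)$ in $r$ and read off the sign from $v_r'(r)>0$ at the endpoint. However, the one variation formula you actually display is wrong, and with it the argument does not close. For a radial eigenfunction the bracket must be
\[
u_r(r)^2\Big(\beta\big((n-1)\tfrac{C(r)}{S(r)}+h'(r)\big)-\beta^2-\Lambda\Big).
\]
That is, the $u_r'(r)^2$ term enters with a minus sign. Your bracket equals $u_r(r)^2\big(2\beta^2-v_r'(r)\big)$, which has no definite sign under either overall sign convention. With the correct sign, \eqref{v first derivative} at $v=\beta$ turns the bracket into exactly $-u_r(r)^2v_r'(r)$. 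This is the formula you guess at the end, and it is true.

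A short proof of it that also gives differentiability runs as follows. Let $y(\cdot\,;\Lambda)$ solve $(S^{n-1}e^hy')'+\Lambda S^{n-1}e^hy=0$ with $y(0)=1$, $y'(0)=0$, and put $V=-y'/y$, so that $\Lambda(r)$ is characterized by $V(r;\Lambda(r))=\beta$. With $z=\partial_\Lambda y$, the identity $\big(S^{n-1}e^h(z'y-zy')\big)'=-S^{n-1}e^hy^2$ gives $\partial_\Lambda V(r;\Lambda)=\int_0^rS^{n-1}e^hy^2\,ds\big/\big(S(r)^{n-1}e^{h(r)}y(r)^2\big)>0$. The implicit function theorem then yields $\Lambda'(r)=-v_r'(r)/\partial_\Lambda V$.

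Your transfer of the hypotheses to all radii $\le R$ is correct. One fix is needed in the endpoint positivity. Since $\xi'=v'-\tfrac12C_1(R)$, condition (i) gives $v'(R)\ge\tfrac12C_1(R)$, not $\tfrac12C_1(R)R$. When $C_1(R)=0$ (or $k(R,n)\le0$ in $\Hn^n$), you must fall back on condition (ii) to get $v'(R)>0$ strictly.

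\paragraph{Comparison with the paper.} The paper's proof is the restriction idea you abandoned, applied in the right problem. The function $u|_{B(r_1)}$ is a positive solution of the radial problem \eqref{4.1} on $(0,r_1)$ with Robin parameter $v(r_1)$. Hence it is the first eigenfunction there, and $\lambda_1(B(r_2),\beta)=\lambda_1(B(r_1),v(r_1))$. Since $v(r_1)<v(r_2)=\beta$ and $\lambda_1(B(r_1),\cdot)$ is increasing in the Robin parameter, the lemma follows.

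Your wrong-direction inequality came from testing the $B(r_2)$-eigenfunction in the $\beta$-problem. Testing instead the $\beta$-eigenfunction of $B(r_1)$ in the $v(r_1)$-problem gives $\lambda_1(B(r_1),v(r_1))\le\lambda_1(B(r_1),\beta)-(\beta-v(r_1))\cdot(\text{positive})$. This yields the monotonicity and also the strictness that the paper's write-up (which concludes only ``$\le$'') leaves implicit.

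The paper's route needs no differentiability in $r$. It uses only the single comparison $v(r_1)<v(r_2)$ on one ball, which Propositions~\ref{prop:Euclidean_v_max} and \ref{prop:Hyperbolic_v_max} already supply; the paper cites Corollary~\ref{Monotonicity of v}, which is more than needed. Your route gives an explicit formula for $\Lambda'(r)$ and strictness for free. The cost is the implicit-function argument and the need for $v_r'(r)>0$ at the endpoint for every radius, which is why you need the hypothesis transfer.
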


\begin{proof}
    Let $0<r_{1}<r_{2}$, and let $w(r)$ denote the positive eigenfunction associated with $\lambda_{1}(B(r_{2}),\beta)$, i.e., 
    \begin{align*}
    \begin{cases}
       w''(r)+\left(\dfrac{(n-1)C(r)}{S(r)}+h'(r)\right)w'(r)+\lambda_{1}(B(r_{2}),\beta)w(r)=0, & r\in (0,r_{2}),\\
        w'(0)=0,\quad w'(r_{2})+\beta w(r_{2})=0.
    \end{cases}
    \end{align*}
    Restricting $w(r)$ ($r\in [0, r_2]$) to $B(r_1)$, define 
    \begin{align*}
        \beta(r_{1}):=-\dfrac{w'(r_{1})}{w(r_{1})}.
    \end{align*}
  Then  it follows that
    \begin{align*}
        \lambda_{1}(B(r_{2}),\beta)=\lambda_{1}\big(B(r_{1}),\beta(r_{1})\big).
    \end{align*}
    By Corollary \ref{Monotonicity of v}, we have $\beta(r_{1})\leq \beta$. Since $\lambda_{1}(B(r),\beta)$ is monotonically increasing in the boundary parameter $\beta$. we conclude that
    \begin{align*}
        \lambda_{1}(B(r_{2}),\beta)
        =\lambda_{1}\big(B(r_{1}),\beta(r_{1})\big)
        \leq \lambda_{1}(B(r_{1}),\beta).
    \end{align*}
    Thus, $\lambda_{1}(B(r),\beta)$ is decreasing in $r>0$.
\end{proof}

\begin{proof}[Proof of Theorem \ref{thm3}]
  It follows from Lemma \ref{lamba monotonicity} that
\begin{align*}
    \max\big\{\lambda_{1}(B^{+},\beta),\lambda_{1}(B^{-},\beta)\big\}\geq \lambda_{1}(D,\beta),
\end{align*}
where $D$ denotes the ball centered at the origin satisfying  half the weighted volume  condition $|D|_{\gamma}=\frac{1}{2}|\Omega|_{\gamma}$. Combining this with \eqref{6.4} yields the desired inequality
$$ \lambda_2(\Omega,\beta)\ge \lambda_1(D,\beta).$$
This completes the proof. 
\end{proof}
\subsection*{Acknowledgements}
The authors thank Professor Frank Morgan for drawing their attention
to the radial log-convex density conjecture attributed to Kenneth Brakke.

 \bibliographystyle{plain}
	\bibliography{ref}

\end{document}